\newcommand{\ppp}[1]{\langle #1\rangle}
\newcommand{\sign}{{\rm sign}}
\newcommand{\ptset}{\omega}
\def\eqdef{\stackrel{\rm def}{=}}
\def\d{{\rm d}}
\def\ddt{\frac{\d}{\d t}}
\renewcommand{\tilde}{\widetilde}
\def\beq{\begin{equation}}
\def\eeq{\end{equation}}
\def\beqs{\begin{equation*}}
\def\eeqs{\end{equation*}}
\newcommand{\tnum}{\rm(\roman*)}
\newcommand{\rnum}{\rm(\alph*)}
\newtheorem{theorem}{Theorem}[section]
\newtheorem{lemma}[theorem]{Lemma}
\newtheorem{proposition}[theorem]{Proposition}
\newtheorem{definition}[theorem]{Definition}
\newtheorem{assumption}[theorem]{Assumption}
\theoremstyle{definition}
\newtheorem{remark}[theorem]{Remark}
\newtheorem{example}[theorem]{Example}
\newtheorem{notation}[theorem]{Notation}
\definecolor{darkred}{rgb}{.70,.12,.20}
\definecolor{darkgreen}{rgb}{.20,.52,.14}
\def\varep{\varepsilon}
\renewcommand{\geq}{\ge}
\newcommand{\R}{\ensuremath{\mathbb R}}
\newcommand{\N}{\ensuremath{\mathbb N}}
\newcommand{\Z}{\ensuremath{\mathbb Z}}
\newcommand{\bigo}{\mathcal O}
\numberwithin{equation}{section}
\title{\textbf{Infinite series asymptotic expansions for decaying solutions of dissipative differential equations with non-smooth nonlinearity}}
\date{\today}
\begin{document}

\author{Dat Cao$^1$}
\address{$^1$Department of Mathematics and Statistics,
Minnesota State University, Mankato\\
Mankato, MN 56001,  U. S. A.}
\email{dat.cao@mnsu.edu}

\author{Luan Hoang$^{2,*}$}
\address{$^2$Department of Mathematics and Statistics,
Texas Tech University\\
1108 Memorial Circle, Lubbock, TX 79409--1042, U. S. A.}
\email{luan.hoang@ttu.edu}
\thanks{$^*$Corresponding author.}

\author{Thinh Kieu$^3$}
\address{$^3$Department of Mathematics,
University of North Georgia, Gainesville Campus\\
3820 Mundy Mill Rd., Oakwood, GA 30566, U. S. A.}
\email{thinh.kieu@ung.edu}


\begin{abstract}
We study the precise asymptotic behavior of a non-trivial solution that converges to zero, as time tends to infinity, of dissipative systems of nonlinear ordinary differential equations. The nonlinear term of the equations may not possess a Taylor series expansion about the origin. This absence technically cripples previous proofs in establishing an asymptotic expansion, as an infinite series, for such a decaying solution. In the current paper, we overcome this limitation and obtain an infinite series asymptotic expansion, as time goes to infinity.
This series expansion provides large time approximations for the solution with the errors decaying exponentially at any given rates. The main idea is to shift the center of the Taylor expansions for the nonlinear term to a non-zero point. Such a point turns out to come from the non-trivial asymptotic behavior of the solution, which we prove by a new and simple method. Our result applies to different classes of non-linear equations that have not been dealt with previously.
\end{abstract}

\maketitle

\tableofcontents
 
 \pagestyle{myheadings}\markboth{D. Cao, L. Hoang and T. Kieu}
{Infinite Series Asymptotic Expansions for Decaying Solutions of Dissipative  Differential Equations}

\section{Introduction}\label{intro}

The Navier--Stokes equations (NSE) for a viscous, incompressible fluid in bounded or periodic domains with a potential body force can be written in the functional form as
\beq\label{NSE}
\frac{\d y}{\d t}+Ay+B(y,y)=0,
\eeq
where $A$ is the (linear) Stokes operator and $B$ is a bilinear form in appropriate functional spaces.

In \cite{FS84a}, Foias and Saut prove  that any regular solution $y(t)$ of \eqref{NSE} has a following asymptotic behavior, as $t\to\infty$,  
\beq\label{rawxi}
e^{\lambda t} y(t)\to \xi\text{ for some $\lambda>0$ and $\xi\ne 0$ with } A\xi=\lambda\xi.
\eeq

This result is extended later by Ghidaglia \cite{Ghidaglia1986a} to a more general class of parabolic inequalities. 
The proof in \cite{Ghidaglia1986a} uses the same Dirichlet quotient technique by Foias--Saut \cite{FS84a}.

In \cite{FS87}, Foias and Saut go further and prove the following  asymptotic expansion, as $t\to\infty$,
\beq\label{FSx}
y(t)\sim \sum_{n=1}^\infty q_n(t) e^{-\mu_n t},
\eeq
in all Sobolev spaces, where $q_n(t)$ are polynomials in $t$, valued in the space of smooth functions. See Definition \ref{Sexpand} below for the precise meaning of \eqref{FSx}. 
Their proof of \eqref{FSx} does not require the knowledge of \eqref{rawxi} and uses a completely different technique.

The expansion \eqref{FSx} is studied deeply in later work \cite{FS91,FHS1,FHOZ1,FHOZ2} concerning its convergence, associated normalization map, normal form, invariant nonlinear manifolds, relation with the Poincar\'e--Dulac theory, etc. It is applied to the  analysis of physics-oriented aspects of fluid flows \cite{FHN1,FHN2},
is established for the NSE in different contexts such as with the Coriolis force \cite{HTi1}, or with non-potential forces \cite{HM2,CaH1,CaH2}, is extended to dissipative wave equations in \cite{Shi2000}, is investigated for general ordinary differential equations (ODE) without forcing functions in \cite{Minea}, and with forcing functions in \cite{CaH3}. 
The considerations of ODE in \cite{Minea,CaH3} turns out to be fruitful, and prompts to the recently obtained asymptotic expansions for the Lagrangian trajectories of viscous, incompressible fluid flows in \cite{H4}.

\medskip
In the same spirit as \cite{Minea,CaH3},  we study, in this paper, the ODE systems in $\R^d$ of the form
\beq\label{vtoy}
\frac{\d y}{\d t} +Ay =F(y),\quad t>0,
\eeq
where $A$ is a $d\times d$ constant (real) matrix, and $F$ is a vector field on $\R^d$.

Our goal is to obtain the  asymptotic expansion \eqref{FSx}, as $t\to\infty$,  for any decaying solution $y(t)$ of \eqref{vtoy}, where $q_n(t)$'s are $\R^d$-valued polynomials in $t$. (For other approaches to the asymptotic analysis of the solutions, see discussions in Remark \ref{discuss} below.)

In all of the above cited papers, function $F$ in \eqref{vtoy} must be infinitely differentiable at the origin. 
It is due to the requirement that $F(y)$ can be approximated, up to arbitrary orders, near the limit of $y(t)$, i.e. the origin, by the polynomials that come from of the Taylor series of $F$. The current paper investigates the situation when this is not the case, and hence the results in \cite{Minea,Shi2000,CaH3} do not apply.

A standard and intuitive way to find expansion \eqref{FSx} is substituting it into equation \eqref{vtoy}, expanding both sides in $t$, and equating the coefficient functions of corresponding exponential terms. Because of the lack of the Taylor series of $F(y)$ about the origin, one does not know how to find the expansion in $t$  for $F(y(t))$ on the right-hand side of \eqref{eg1}. The task seems to be impossible. However, as will be proved later in this paper, we are still able to obtain the infinite series asymptotic expansion \eqref{FSx} for $y(t)$ in many cases. This is achieved by combining Foias--Saut's method in \cite{FS87} with the following new idea.
For illustrative purposes, we consider an example,
\beq\label{eg1}
\frac{\d y}{\d t}+Ay=F(y)=\frac{|y|^{1/3}y}{1+|y|^{1/4}}.
\eeq

First, we use the geometric series to approximate $F(y)$ by a series 
\beq \label{Fsum}
F(y)\sim \sum_{k=1}^\infty F_k(y)\text{ as }y\to 0,
\eeq 
where $F_k$'s are a positively homogeneous  functions of strictly increasing degrees $\beta_k\to\infty$. (In general cases, \eqref{Fsum} is a hypothesis.) See Definition \ref{phom} and Assumption \ref{assumpG} for details. After establishing the asymptotic approximation \eqref{rawxi} for some eigenvector $\xi$ of $A$, we approximate each $F_k$ by using its Taylor series about $\xi\ne 0$. Therefore, we can bypass the lack of the Taylor series of $F$ about $0$.
This, of course, is just a brief description and must be facilitated with capable techniques. 

The paper is organized as follows. In section \ref{bkgsec}, we set the assumptions for matrix $A$, establish basic properties and recall a crucial approximation lemma, Lemma \ref{exp-odelem}. 

In section \ref{secD}, we prove, for a more general equation \eqref{geneq} with a general structure \eqref{Fcond}, that any non-trivial, decaying solution has the first asymptotic approximation \eqref{rawxi}, see Theorem \ref{thmD2}. 
This result can be obtained by repeating Foias--Saut's proof in \cite[Proposition 3]{FS84a}, or applying \cite[Theorem 1.1]{Ghidaglia1986a}. However, our new proof provides an alternative method and, at least for the current setting, is shorter. 
See Remark \ref{compare} for comparisons between the proofs.

The paper's main result is in section \ref{secE}. In Theorem \ref{thm3}, we prove that any non-trivial, decaying solution of \eqref{vtoy} has an asymptotic expansion of the form \eqref{FSx}. In order to implement to general scheme of Foias--Saut's \cite{FS87}, we use the first approximation $e^{-\lambda t} \xi$ in \eqref{rawxi}. By the positive homogeneity of each function $F_k$ in \eqref{Fsum}, we can scale $y(t)$ by the factor  $e^{-\lambda t}$ and then shift the Taylor expansions of  $F_k$'s from center zero to center $\xi\ne 0$. Because of the above scaling and its effect during complicated iterations, the exponential rates must be shifted back, see the set $\tilde S$ in \eqref{deftilS}, and forth, see the set $S$ in \eqref{defMu1}, when being generated in Definition \ref{SS1}.

Although we focus on infinite series expansions in this paper, we consider, in the first part of section \ref{finapprox}, the case when the function $F(y)$ has only a finite sum approximation, see \eqref{errF3}.
We prove in Theorem \ref{thm51} that any decaying solution $y(t)$ has a corresponding finite sum approximation.
In the second part of section \ref{finapprox}, Theorem \ref{thm57} generalizes Theorems \ref{thm3} and \ref{thm51} by relaxing the conditions on functions $F$ and  $F_k$'s, in accordance with the knowledge of the eigenspaces of $A$. 

Section \ref{specific} is devoted to identifying some specific classes of functions $F$, see Theorems \ref{thm63}, \ref{thm65} and \ref{thm69}. Briefly speaking, these functions can be expanded in terms of power-like functions of the types  $x_i^{\gamma_i}$, $|x_i|^{\gamma_i}$, $|x_i|^{\gamma_i}\sign(x_i)$ for coordinates $x_i$'s of $x\in\R^d$,  or of type $\|x\|_p^\gamma$, or, more generally, $\|P(x)\|_p^\gamma$ with $\ell^p$-norms  $\|\cdot\|_p$, where $P$ is a homogeneous polynomial. 
Lastly, we compare, in Remark \ref{discuss}, our results with other asymptotic expansion theories for ODE, notably the one that has been developed by Bruno and collaborators, see \cite{BrunoBook2000,Bruno2004,Bruno2012,Bruno2018} and references therein.

\section{Notation, definitions and background}\label{bkgsec}

We will use the following notation throughout the paper. 

\begin{itemize}
 \item $\N=\{1,2,3,\ldots\}$ denotes the set of natural numbers, and $\Z_+=\N\cup\{0\}$.

 \item Denote $\R_*=\R\setminus\{0\}$, and, for $n\in\N$,   $\R_*^n=(\R_*)^n$ and $\R^n_0=\R^n\setminus\{0\}$.
  
 \item For any vector $x\in\R^n$, we denote by $|x|$ its Euclidean norm, and by $x^{(k)}$ the $k$-tuple $(x,\ldots,x)$ for $k\ge 1$, and  $x^{(0)}=1$. 
 
 \item For an $m\times n$ matrix $M$, its Euclidean norm in $\R^{mn}$ is denoted by $|M|$. 
 
 \item Let $f$ be an $\R^m$-valued function and $h$ be a non-negative function, both  are defined in a neighborhood of the origin in $\R^n$. 
 We write $f(x)=\bigo(h(x))$ as $x\to 0$, if there are positive numbers  $r$ and $C$ such that $|f(x)|\le Ch(x)$ for all $x\in\R^n$ with $|x|<r$.   
 
 \item Let $f:[T_0,\infty)\to \R^n$ and $h:[T_0,\infty)\to[0,\infty)$ for some $T_0\in \R$. We write 
 $$f(t)=\mathcal O(h(t)), \text{ implicitly meaning as $t\to\infty$,} $$
 if there exist numbers $T\ge T_0$ and $C>0$ such that $|f(t)|\le Ch(t)$ for all $t\ge T$.

 \item Let $T_0\in\R$, functions $f,g:[T_0,\infty)\to\R^n$, and $h:[T_0,\infty)\to[0,\infty)$. 
We will conveniently write  $f(t)=g(t)+\bigo(h(t))$ to indicate
  $f(t)-g(t)=\bigo(h(t))$.  
\end{itemize}

The type of asymptotic expansions at time infinity that is studied in this paper is the following.

\begin{definition}\label{Sexpand}
Let $(X,\|\cdot\|_X)$ be a normed space and  $(\alpha_n)_{n=1}^\infty$ be a  sequence of strictly increasing non-negative real numbers. 
A function $f:[T,\infty)\to X$, for some $T\ge 0$, is said to have an asymptotic expansion
\beq\label{ex1}
f(t)\sim\sum_{n=1}^\infty f_n(t)e^{-\alpha_n t} \quad\text{in } X,
\eeq
where each $f_n:\R\to X$ is a polynomial, if one has, for any $N\ge 1$, that
\beq\label{ex2}
\Big \|f(t)-\sum_{n=1}^N f_n(t)e^{-\alpha_n t}\Big \|_X=\bigo(e^{-(\alpha_N+\varep_N)t})\text{ for some $\varep_N>0$.}
\eeq
\end{definition}

One can see, e.g. \cite[Lemma 4.1]{CaH3}, that the polynomials $f_1,f_2,\ldots,f_N$ in \eqref{ex2} are unique. 

In the case $\alpha_n\to\infty$ as $n\to\infty$, the (infinite series) asymptotic expansion \eqref{ex1} provides exponentially precise approximations for $f(t)$, as $t\to\infty$.
More specifically, for any $\gamma>0$, the partial sum $\sum_{n=1}^N f_n(t)e^{-\alpha_n t}$ of the series, with sufficiently large $N$, approximates $f(t)$, as $t\to\infty$, with an error of order $\bigo(e^{-\gamma t})$.

Regarding the nonlinearity in \eqref{vtoy}, the function $F$ will be approximated near the origin by functions, not necessarily polynomials,  in the following class.

\begin{definition}\label{phom} 
Suppose $(X,\|\cdot\|_X)$ and $(Y,\|\cdot\|_Y)$ be two (real) normed spaces.

A function $F:X\to Y$ is positively homogeneous of degree $\beta\ge 0$ if
\beq\label{Fb}
F(tx)=t^\beta F(x)\text{ for any $x\in X$ and any $t>0$.}
\eeq

Define $\mathcal H_\beta(X,Y)$ to be the set of positively homogeneous  functions of order $\beta$ from $X$ to $Y$, and  
denote $\mathcal H_\beta(X)=\mathcal H_\beta(X,X)$.

For a function $F\in \mathcal H_\beta(X,Y)$,  define
\beqs
\|F\|_{\mathcal H_\beta}=\sup_{\|x\|_X=1} \|F(x)\|_Y=\sup_{x\ne 0} \frac{\|F(x)\|_Y}{\|x\|_X^\beta}.
\eeqs
\end{definition}

The following are immediate properties.
\begin{enumerate}[label=\rnum]
 \item If $F\in \mathcal H_\beta(X,Y)$ with $\beta>0$, then taking $x=0$ and $t=2$ in \eqref{Fb} gives 
\beq\label{Fzero}  F(0)=0.
\eeq 
If, in addition, $F$ is bounded on the unit sphere in $X$, then  
\beq\label{Fhbound}
\|F\|_{\mathcal H_\beta}\in[0,\infty) \text{ and } 
\|F(x)\|_Y\le \|F\|_{\mathcal H_\beta} \|x\|_X^{\beta} \quad \forall x\in X.
\eeq

\item  The zero function (from $X$ to $Y$) belongs to $\mathcal H_\beta(X,Y)$ for all $\beta\ge 0$, and a constant function (from $X$ to $Y$) belongs to $\mathcal H_0(X,Y)$.

\item\label{ps} Each $\mathcal H_\beta(X,Y)$, for $\beta\ge 0$, is a linear space.

\item\label{pm} If $F_1\in \mathcal H_{\beta_1}(X,\R)$ and $F_2\in \mathcal H_{\beta_2}(X,Y)$, then $F_1F_2\in \mathcal H_{\beta_1+\beta_2}(X,Y)$.

\item\label{pp} If $F:X\to Y$ is a homogeneous polynomial of degree $m\in\Z_+$, then $F\in \mathcal H_m(X,Y)$. 
\end{enumerate}

In \ref{pp} above and throughout the paper, a constant function, even when it is zero, is considered as a homogeneous polynomial of degree zero.

\medskip
The space $\mathcal H_\beta(X,Y)$ can contain much more complicated functions than homogeneous polynomials. 
For example, let $s\in \Z_+$, numbers $\nu_j$, for $1\le j\le s$, be positive, $P_j$, for $1\le j\le s$, be a homogeneous polynomial of degree $m_j\in\N$ from $X$ to a normed space $(Y_j,\|\cdot\|_{Y_j})$. Let $P_0:X\to Y$ be  homogeneous polynomial of degree $m_0\in\Z_+$. 
Consider function $F$ defined by  
\beq\label{heg}
F(x)=\|P_1(x)\|_{Y_1}^{\nu_1} \|P_2(x)\|_{Y_2}^{\nu_2} \ldots \|P_s(x)\|_{Y_s}^{\nu_s} P_0(x),\text{ for $x\in X$.}
\eeq

Then one has
\beq\label{simF}
F\in\mathcal H_\beta(X,Y), \text{ where } \beta=m_0+\sum_{j=1}^s m_j\nu_j.
\eeq 

Thanks to \eqref{simF} and property \ref{ps} above, any linear combination of functions of the form in \eqref{heg} with the same number $\beta$ also  belongs to $\mathcal H_\beta(X,Y)$.

\medskip
If $n,m,k\in \N$ and $\mathcal L$ is an $m$-linear mapping from $(\R^n)^m$ to $\R^k$, the norm of $\mathcal L$ is defined by
\beq\label{Lnorm}
\|\mathcal L\|=\max\{ |\mathcal L(x_1,x_2,\ldots,x_m)|:x_j\in\R^n,|x_j|=1,\text{ for } 1\le j\le m\}.
\eeq

It is known that the norm $\|\mathcal L\|$ belongs to  $[0,\infty)$, and one has 
\beq\label{multiL}
|\mathcal L(x_1,x_2,\ldots,x_m)|\le \|\mathcal L\|\cdot |x_1|\cdot |x_2|\ldots |x_m|
\quad \forall x_1,x_2,\ldots,x_m\in\R^n.
\eeq

In particular, when $m=1$, \eqref{Lnorm} yields the operator norm for any $k\times n$ matrix $\mathcal L$.

\medskip
Let the space's dimension $d\in \N$ be fixed throughout the paper.
Consider the ODE system \eqref{vtoy}.

\begin{assumption}\label{assumpA}
Hereafter, matrix $A$ is a diagonalizable with positive eigenvalues.
\end{assumption}
  
Thanks to Assumption \ref{assumpA}, the spectrum $\sigma(A)$ of matrix $A$ consists of eigenvalues $\Lambda_k$'s, for $1\le k\le d$,  which are positive and increasing in $k$.
Then there exists an invertible matrix $S$ such that
\beqs
A=S^{-1} A_0 S,
\text{ where } A_0={\rm diag}[\Lambda_1,\Lambda_2,\ldots,\Lambda_d].
\eeqs 

Denote the distinct eigenvalues of $A$ by $\lambda_j$'s that are strictly increasing in $j$, i.e., 
 \beqs
 0<\lambda_1=\Lambda_1<\lambda_2<\ldots<\lambda_{d_*}=\Lambda_d\quad \text{ with } 1\le d_*\le d.
 \eeqs
 
For $1\le k,\ell\le d$, let $E_{k\ell}$ be the elementary $d\times d$ matrix $(\delta_{ki}\delta_{\ell j})_{1\le i,j\le d}$, where $\delta_{ki}$ and $\delta_{\ell j}$ are the Kronecker delta symbols.

For $\lambda\in \sigma(A)$, define
 \beqs
\hat R_\lambda=\sum_{1\le i\le d,\Lambda_i=\lambda}E_{ii}\text{ and } R_\lambda=S^{-1}\hat R_\lambda S.
 \eeqs
 
Then one immediately has
\beq \label{Pc} I_d = \sum_{j=1}^{d_*} R_{\lambda_j},
\quad R_{\lambda_i}R_{\lambda_j}=\delta_{ij}R_{\lambda_j},
\quad  AR_{\lambda_j}=R_{\lambda_j} A=\lambda_j R_{\lambda_j},
\eeq 
and there exists $c_0\ge 1$ such that
\beq\label{Requiv}
c_0^{-1}|x|^2\le \sum_{j=1}^{d_*} |R_{\lambda_j}x|^2\le c_0|x|^2 \text{ for all } x\in\R^d.
\eeq

Below, we recall  a key approximation lemma for linear ODEs. It is Lemma 2.2 of \cite{CaH3}, which originates from Foias--Saut's work \cite{FS87}, and is based on the first formalized version \cite[Lemma 4.2]{HM2}.

\begin{lemma}[{\cite[Lemma 2.2]{CaH3}}]\label{exp-odelem}
Let $p(t)$ be an $\R^d$-valued polynomial and  $g:[T,\infty)\to \R^d$, for some  $T\in\R$, be a continuous function satisfying $|g(t)|=\bigo(e^{-\alpha t})$ for some $\alpha>0$.
Suppose $\lambda > 0$ and $y\in C([T,\infty),\R^d)$ is a solution of 
 \beqs
 y'(t)=-(A-\lambda I_d)y(t)+p(t)+g(t),\quad \text{for }t\in(T,\infty).
 \eeqs

If $\lambda>\lambda_1$, assume further that
\beq\label{ellams}
\lim_{t\to\infty} (e^{(\bar\lambda -\lambda )t}|y(t)|)=0,\text{ where } \bar\lambda=\max\{\lambda_j: 1\le j\le d_*,\lambda_j<\lambda\}.
\eeq 
 
Then there exists a unique $\R^d$-valued polynomial $q (t)$ such that 
\beq\label{qeq} q'(t)=-(A-\lambda I_d) q (t) +  p (t)  \text{ for }t\in\R,
\eeq  
and
 \beq\label{yqest}
 |y(t)-q(t)|=\bigo(e^{-\varep  t}) \text{ for some $\varepsilon>0$.}
 \eeq
 \end{lemma}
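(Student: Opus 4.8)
The plan is to diagonalize the linear part using the spectral projections $R_{\lambda_j}$ from \eqref{Pc} and reduce the problem to a finite family of decoupled equations, one on each eigenspace, each of which can be solved explicitly by variation of parameters. Applying $R_{\lambda_j}$ to the equation for $y$ and to the target equation \eqref{qeq}, and using $AR_{\lambda_j}=\lambda_j R_{\lambda_j}$, the components $y_j=R_{\lambda_j}y$, $q_j=R_{\lambda_j}q$, $p_j=R_{\lambda_j}p$, $g_j=R_{\lambda_j}g$ satisfy $y_j'=-\mu_j y_j+p_j+g_j$ and $q_j'=-\mu_j q_j+p_j$, where I write $\mu_j=\lambda_j-\lambda$. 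Since $I_d=\sum_j R_{\lambda_j}$, once each $q_j$ is constructed and each $w_j:=y_j-q_j$ is shown to decay, summing reconstructs $q=\sum_j q_j$, which solves \eqref{qeq} on all of $\R$, and \eqref{Requiv} controls $|y-q|$ through $\sum_j|w_j|$. The construction and the estimate split into the three regimes $\mu_j>0$, $\mu_j=0$, and $\mu_j<0$.

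For the existence of the polynomial $q_j$, note that when $\mu_j\ne 0$ the operator $\tfrac{\d}{\d t}+\mu_j$ is invertible on the space of polynomials, its inverse being the finite Neumann sum $\sum_{k\ge 0}(-1)^k\mu_j^{-k-1}(\tfrac{\d}{\d t})^k$; this yields a unique polynomial $q_j$, valued in the range of $R_{\lambda_j}$, solving $q_j'+\mu_j q_j=p_j$. When $\mu_j=0$ (which occurs only if $\lambda=\lambda_j\in\sigma(A)$) I take $q_j$ to be an antiderivative of $p_j$, leaving the free additive constant to be pinned down in the decay step below.

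The decay estimate for $w_j$, which satisfies $w_j'+\mu_j w_j=g_j$ with $|g_j(t)|=\bigo(e^{-\alpha t})$, is the heart of the matter. If $\mu_j>0$, variation of parameters gives $w_j(t)=e^{-\mu_j(t-T)}w_j(T)+\int_T^t e^{-\mu_j(t-s)}g_j(s)\,\d s$, and both terms are $\bigo(e^{-\varep_j t})$ (with a harmless factor $t$ in the borderline case $\mu_j=\alpha$); no hypothesis is needed here. If $\mu_j=0$, then $w_j'=g_j$ and $w_j(t)=w_j(\infty)-\int_t^\infty g_j(s)\,\d s$, with $w_j(\infty)$ finite since $g_j$ is integrable; I fix the constant in $q_j$ so that $w_j(\infty)=0$, whence $|w_j(t)|\le\int_t^\infty|g_j(s)|\,\d s=\bigo(e^{-\alpha t})$. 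The genuinely delicate case is $\mu_j<0$, where $e^{-\mu_j t}$ grows and the hypothesis \eqref{ellams} must be used. Here the decaying particular solution is
\beqs
w_j^*(t)=-\int_t^\infty e^{\mu_j(s-t)}g_j(s)\,\d s,\qquad |w_j^*(t)|\le\int_t^\infty|g_j(s)|\,\d s=\bigo(e^{-\alpha t}),
\eeqs
and every solution has the form $w_j=w_j^*+C_j e^{-\mu_j t}$ for some constant $C_j$ in the range of $R_{\lambda_j}$. Multiplying by $e^{(\bar\lambda-\lambda)t}$, the terms $e^{(\bar\lambda-\lambda)t}y_j$ and $e^{(\bar\lambda-\lambda)t}q_j$ tend to $0$ — the first by \eqref{ellams} together with $|y_j|\le|R_{\lambda_j}||y|$, the second because $\bar\lambda-\lambda<0$ dominates the polynomial $q_j$ — so $e^{(\bar\lambda-\lambda)t}w_j\to 0$; since also $e^{(\bar\lambda-\lambda)t}w_j^*\to 0$, this forces $C_j e^{(\bar\lambda-\lambda_j)t}\to 0$, and because $\lambda_j\le\bar\lambda$ makes the exponent $\bar\lambda-\lambda_j\ge 0$, we conclude $C_j=0$ and $w_j=w_j^*=\bigo(e^{-\alpha t})$. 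Taking $\varep=\min_j\varep_j>0$ then gives \eqref{yqest}.

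For uniqueness, suppose $q$ and $\tilde q$ are two polynomials both satisfying \eqref{qeq} and \eqref{yqest}. Then $r=q-\tilde q$ is a polynomial with $|r(t)|\le|y(t)-q(t)|+|y(t)-\tilde q(t)|=\bigo(e^{-\varep t})\to 0$, and since a nonzero polynomial cannot tend to $0$, we get $r\equiv 0$. I expect the main obstacle to be precisely the regime $\mu_j<0$: ruling out the exponentially growing homogeneous mode is exactly where the a priori bound \eqref{ellams} is indispensable, whereas the remaining cases are routine applications of variation of parameters.
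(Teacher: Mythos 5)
Your proof is correct and follows essentially the same route as the paper's source: the paper defers the proof to \cite[Lemma 2.2]{CaH3}, but the explicit formula \eqref{qdef} it records — forward integration on eigenspaces with $\lambda_j>\lambda$, a constant fixed by $R_{\lambda_j}y(T)+\int_0^\infty R_{\lambda_j}g$ when $\lambda_j=\lambda$, and backward integration when $\lambda_j<\lambda$ — is exactly your spectral decomposition via the projections $R_{\lambda_j}$ with the three sign cases, your normalization $w_j(\infty)=0$, and your backward-integrated particular solution. Your use of \eqref{ellams} to rule out the growing homogeneous mode $C_je^{-(\lambda_j-\lambda)t}$ in the case $\lambda_j<\lambda$ is precisely the step where that hypothesis is needed, so the argument is complete as written.
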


In fact, the polynomial $q(t)$ in Lemma \ref{exp-odelem} can be defined explicitly as follows.
We write, with the use of \eqref{Pc}, $q(t)=\sum_{j=1}^{d_*} R_{\lambda_j} q(t)$, where, for each $1\le j\le d_*$ and $t\in \R$, 
\beq\label{qdef}        
R_{\lambda_j} q(T+t)=
\begin{cases}
e^{-(\lambda_j-\lambda) t}\int_{0}^t e^{(\lambda_j-\lambda)\tau }R_{\lambda_j}p(T+\tau) d\tau&\text{if }\lambda_j>\lambda,\\
R_{\lambda_j}y(T) +\int_0^\infty  R_{\lambda_j}g(T+\tau)d\tau + \int_0^t R_{\lambda_j}p(T+\tau)d\tau &\text{if }\lambda_j=\lambda,\\
-e^{-(\lambda_j-\lambda) t}\int_t^\infty e^{(\lambda_j-\lambda)\tau }R_{\lambda_j}p(T+\tau) d\tau&\text{if }\lambda_j<\lambda. 
\end{cases}
\eeq

In the case $p(t)\equiv 0$, it follows \eqref{qdef} that $q(t)\equiv \xi$, which is a constant vector in $\R^d$. 
Then \eqref{qeq} and \eqref{yqest} read as
\beq\label{qxi}
(A-\lambda I_d)\xi=0\text{ and }|y(t)-\xi|=\bigo(e^{-\varep t}).
\eeq

\section{The first asymptotic approximation}\label{secD}

Consider the following ODE on $\R^d$, which is more general than \eqref{vtoy},
 \beq  \label{geneq}
\frac{\d y}{\d t} + Ay =F(t,y), \quad t>0.
 \eeq

 \begin{assumption}\label{assumpF}
Function $F$ mapping $(t,x)\in[0,\infty)\times \R^d$ to $F(t,x)\in\R^d$ is continuous in $[0,\infty)\times \R^d$, locally Lipschitz with respect to $x$ in $[0,\infty)\times \R^d$, and there exist positive numbers $c_*, \varep_*, \alpha$ such that 
\beq \label{Fcond}
 |F(t,x)| \le c_*|x|^{1+\alpha} \ \forall  t \ge 0,  \ \forall x \in \R^d  \text{ with } |x| \le \varep_*.
\eeq  
 \end{assumption}

It follows \eqref{Fcond} that $F(t,0)=0$ for all $t\ge 0$.
By the uniqueness/backward uniqueness of ODE system \eqref{geneq}, a solution $y(t)\in C^1([0,\infty))$ of \eqref{geneq} has the property
\beq \label{bwu}
y(0)=0\text{ if and only if } y(t)=0 \text{ for all }t\ge 0.
\eeq 

Thanks to Assumption \ref{assumpA} and \eqref{Fcond}, it is well-known that the trivial solution $y(t)\equiv 0$ of \eqref{vtoy} is asymptotically stable, see, for example, \cite[Theorem 1.1, Chapter 13]{CL55}. 

A solution  $y(t)\in C^1([0,\infty))$ of \eqref{geneq}  that satisfies $y(0)\ne 0$  and
\beq\label{ylim}
\lim_{t\to\infty} y(t)=0,
\eeq 
will be referred to as a \emph{non-trivial, decaying solution}. These solutions will be the focus of our study.

The following elementary result provides, for non-trivial, decaying solutions, a more precise upper bound, compared to \eqref{ylim}, and an additional lower bound.

\begin{proposition}\label{nzprop}
Let $y(t)$ be a non-trivial, decaying solution of \eqref{geneq}. Then there exists a number $C_1>0$ such that
\beq\label{yexp1}
|y(t)|\le C_1 e^{-\Lambda_1 t} \text{ for all $t\ge 0$.}
\eeq

Moreover, for any $\varep>0$, there exists a number $C_2=C_2(\varep)>0$ such that
\beq\label{yexp2}
|y(t)|\ge  C_2 e^{-(\Lambda_d+\varep) t} \text{ for all $t\ge 0$.}
\eeq 
\end{proposition}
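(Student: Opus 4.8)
The plan is to control the scalar energy $w(t)=|Sy(t)|^2$, where $S$ is the matrix from Assumption~\ref{assumpA} that diagonalizes $A$, and to read off both bounds from a single differential inequality for $w$. Setting $z=Sy$, equation \eqref{geneq} becomes $z'=-A_0z+SF(t,S^{-1}z)$ with $A_0={\rm diag}[\Lambda_1,\dots,\Lambda_d]$, so that $\Lambda_1|z|^2\le \langle A_0z,z\rangle=\sum_j\Lambda_j z_j^2\le \Lambda_d|z|^2$. Since $\lim_{t\to\infty}y(t)=0$, there is a time $T$ with $|y(t)|\le \varep_*$ for $t\ge T$, and then \eqref{Fcond} transfers to $|SF(t,S^{-1}z)|\le c'|z|^{1+\alpha}$ with $c'=c_*|S|\,|S^{-1}|^{1+\alpha}$. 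Differentiating, $w'=-2\langle A_0z,z\rangle+2\langle SF,z\rangle$, whence for $t\ge T$
\[
-2\Lambda_d\,w-2c'\,w^{1+\alpha/2}\ \le\ w'(t)\ \le\ -2\Lambda_1\,w+2c'\,w^{1+\alpha/2}.
\]
As $w(t)\to 0$, the lower-order term $c'w^{\alpha/2}$ can be made smaller than any prescribed number by taking $t$ large, which drives both estimates.

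For the upper estimate I would first fix $\delta>0$ and choose $T_\delta\ge T$ so that $c'w(t)^{\alpha/2}\le\delta$ for $t\ge T_\delta$; the right-hand inequality then gives $w'\le -2(\Lambda_1-\delta)w$, and Gronwall yields the crude bound $|y(t)|\le C_\delta e^{-(\Lambda_1-\delta)t}$, which falls short of the claimed sharp rate. To upgrade it, I would take $\delta<\alpha\Lambda_1/(1+\alpha)$ so that $\beta:=(1+\alpha)(\Lambda_1-\delta)>\Lambda_1$; then \eqref{Fcond} forces $|F(t,y(t))|=\bigo(e^{-\beta t})$ with $\beta>\Lambda_1$. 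Projecting with $R_{\lambda_j}$ (using \eqref{Pc}) decouples the linear part into $\frac{\d}{\d t}R_{\lambda_j}y+\lambda_j R_{\lambda_j}y=R_{\lambda_j}F(t,y)$, and the Duhamel formula
\[
R_{\lambda_j}y(t)=e^{-\lambda_j(t-T)}R_{\lambda_j}y(T)+\int_T^t e^{-\lambda_j(t-s)}R_{\lambda_j}F(s,y(s))\,\d s
\]
gives $|R_{\lambda_j}y(t)|=\bigo(e^{-\Lambda_1 t})$ for every $j$: for $j=1$ the integral converges because $\beta>\lambda_1=\Lambda_1$, while for $j\ge2$ one also has $\lambda_j>\Lambda_1$, so the decay is at least $e^{-\Lambda_1 t}$ in every case. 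The norm equivalence \eqref{Requiv} then yields $|y(t)|=\bigo(e^{-\Lambda_1 t})$, and adjusting the constant on the compact interval $[0,T]$ gives \eqref{yexp1}. (Alternatively, the rescaling $\tilde y=e^{\Lambda_1 t}y$ solves $\tilde y'=-(A-\Lambda_1 I_d)\tilde y+e^{\Lambda_1 t}F(t,y)$ with exponentially small forcing, so Lemma~\ref{exp-odelem} with $p\equiv0$ and \eqref{qxi} deliver the same bound, and even the limit $e^{\Lambda_1 t}y(t)\to\xi$.)

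For the lower bound I would use the left-hand inequality. First note $y(t)\ne0$ for all $t\ge0$: if $y(t_0)=0$, then uniqueness and backward uniqueness force $y(0)=0$, contradicting non-triviality, so $w(t)>0$ everywhere (cf.\ \eqref{bwu}). Given $\varep>0$, choose $T'_\varep\ge T$ with $c'w(t)^{\alpha/2}\le\varep$ for $t\ge T'_\varep$; the left-hand inequality gives $w'\ge-2(\Lambda_d+\varep)w$, and the one-sided Gronwall estimate yields $w(t)\ge w(T'_\varep)e^{-2(\Lambda_d+\varep)(t-T'_\varep)}$, i.e.\ $|y(t)|\ge C e^{-(\Lambda_d+\varep)t}$ for $t\ge T'_\varep$. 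Since $|y|$ is continuous and strictly positive on $[0,T'_\varep]$, shrinking the constant delivers \eqref{yexp2} for all $t\ge0$.

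I expect the only genuine difficulty to be the \emph{sharpness} of the upper bound. The energy method by itself degrades the exponent from $\Lambda_1$ to $\Lambda_1-\delta$, because absorbing the nonlinear term costs a factor in the decay rate; recovering the exact rate $\Lambda_1$ requires re-entering the linear structure once the nonlinear forcing is known to decay strictly faster than $e^{-\Lambda_1 t}$, which is precisely what the Duhamel bootstrap (or Lemma~\ref{exp-odelem}) accomplishes. The lower bound tolerates the loss $\varep$ and so follows directly from the one-sided energy inequality together with the non-vanishing of $y$.
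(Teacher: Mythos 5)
Your proposal is correct, and its overall architecture --- a crude exponential bound with a degraded rate, a bootstrap to recover the sharp rate $\Lambda_1$, and a one-sided Gronwall estimate plus backward uniqueness for the lower bound --- mirrors the paper's proof; in particular your lower-bound argument is essentially identical to the paper's, with $w(t)=|Sy(t)|^2$ playing the role of the equivalent quantity $Y^2(t)=\sum_{j=1}^{d_*}|R_{\lambda_j}y(t)|^2$. The one genuinely different step is the upgrade from rate $\Lambda_1-\delta$ to $\Lambda_1$. The paper never leaves the scalar level: it feeds the crude bound back into the energy inequality, so that the nonlinear term becomes a pure forcing term $\bigo(e^{-2\beta(t-T_\varep)})$ with $\beta=(1+\alpha/2)(\Lambda_1-\varep)>\Lambda_1$, and one more application of Gronwall's inequality to $\ddt Y^2\le -2\Lambda_1 Y^2+2C_1'e^{-2\beta(t-T_\varep)}$ already yields the sharp rate. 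You instead return to the vector equation, decouple it with the spectral projections $R_{\lambda_j}$ via \eqref{Pc}, and estimate each component through Duhamel's formula (or, in your parenthetical variant, invoke Lemma \ref{exp-odelem} with $p\equiv 0$ and \eqref{qxi}). Both routes are valid and hinge on the same arithmetic: the loss must be small enough that $(1+\alpha)$ times the degraded rate exceeds $\Lambda_1$. The paper's route is slightly more economical and self-contained, since it needs no variation-of-constants formula and no case analysis over the eigenvalues $\lambda_j$ versus $\beta$; your route buys more information --- the Lemma \ref{exp-odelem} variant already produces the limit $e^{\Lambda_1 t}y(t)\to\xi$ for some eigenvector-or-zero $\xi$, which is precisely Step 1 of the paper's proof of Theorem \ref{thmD2} --- so in effect you prove the proposition by anticipating the machinery of the next theorem, whereas the paper deliberately keeps Proposition \ref{nzprop} elementary and independent of that lemma.
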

\begin{proof}
 Set $Y(t)=(\sum_{j=1}^{d_*} |R_{\lambda_j}y(t)|^2)^{1/2}$. 
Applying $R_{\lambda_j}$ to equation \eqref{geneq}, taking dot product of the resulting equation with $R_{\lambda_j}y$, using the last property in \eqref{Pc}, and then summing over $j$, we obtain
 \beq\label{yraw}
 \frac12\ddt Y^2(t)= \frac12\ddt \sum_{j=1}^{d_*} |R_{\lambda_j}y|^2=-\sum_{j=1}^{d_*}\lambda_j |R_{\lambda_j}y|^2 +\sum_{j=1}^{d_*} R_{\lambda_j}F(t,y) \cdot R_{\lambda_j}y.
 \eeq 

 Note that 
\beq\label{LY}
\Lambda_1 \sum_{j=1}^{d_*} |R_{\lambda_j}y|^2\le \sum_{j=1}^{d_*}\lambda_j |R_{\lambda_j}y|^2\le \Lambda_d \sum_{j=1}^{d_*} |R_{\lambda_j}y|^2.
\eeq

Denote $C_0=\sum_{j=1}^{d_*} \|R_{\lambda_j}\|^2$.  
Let $\varep>0$ be arbitrary. By \eqref{ylim} and the asymptotic stability of the trivial solution, there exists $T_\varep\ge 0$ such that 
\beq\label{Te}
|y(t)|\le \varep_*\text{ and } C_0c_*c_0|y(t)|^\alpha\le \varep \ \forall t\ge T_\varep.
\eeq 

We have, for $t\ge T_\varep$,
 \beq\label{RF0}
\Big|\sum_{j=1}^{d_*} R_{\lambda_j}F(t,y) \cdot R_{\lambda_j}y \Big| \le \sum_{j=1}^{d_*} \|R_{\lambda_j}\|^2 |F(t,y)|\cdot |y|
\le C_0 c_*|y|^{2+\alpha}.
 \eeq

 Combining \eqref{RF0} with \eqref{Requiv} and \eqref{Te} gives
 \beq\label{RFy}
\Big|\sum_{j=1}^{d_*} R_{\lambda_j}F(t,y) \cdot R_{\lambda_j}y \Big|
\le C_0 c_* |y|^\alpha \cdot c_0 Y^2(t)
\le \varep Y^2(t)\ \forall t\ge T_\varep.
 \eeq 
 
\noindent \textit{Proof of \eqref{yexp1}.} By equation \eqref{yraw}, the first inequality in \eqref{LY}, and \eqref{RFy}, we have  
 \begin{align*}
\frac12\ddt Y^2&\le -(\Lambda_1-\varep)Y^2 \ \forall t\ge T_\varep.
 \end{align*}
 
 Thus, for $t\ge T_\varep$, 
 $$Y^2(t)\le Y^2(T_\varep)e^{-2(\Lambda_1-\varep)(t-T_\varep)}.$$
 
 Using this estimate and \eqref{RF0} in \eqref{yraw} gives, for $t>T_\varep$,
 \beqs
\frac12 \ddt Y^2
\le -\Lambda_1 Y^2 + C_0c_*(c_0^{1/2} Y)^{2+\alpha}
\le -\Lambda_1 Y^2 + C'_1 e^{-(2+\alpha)(\Lambda_1-\varep)(t-T_\varep)},
 \eeqs
hence,
 \begin{equation}\label{yraw2}
\ddt Y^2 \le -2\Lambda_1 Y^2 + 2C'_1 e^{-2\beta (t-T_\varep)},
 \end{equation}
 where $\beta=(1+\alpha/2)(\Lambda_1-\varep)$ and $C'_1$ is a positive number.  
 
 Choose $\varep$ sufficiently small so that $\beta>\Lambda_1$. Applying Gronwall's inequality to \eqref{yraw2}, for $t\ge T_\varep$, yields
 \begin{align*}
 Y^2(t)&\le e^{-2\Lambda_1 (t-T_\varep)}Y^2(T_\varep) +2C'_1\int_{T_\varep}^t e^{-2\Lambda_1(t-\tau)}e^{-2\beta(\tau-T_\varep)}\d\tau, \\
 \intertext{and, also by \eqref{Requiv},}
 |y(t)|^2&\le c_0Y^2(t)\le e^{-2\Lambda_1 (t-T_\varep)}c_0\Big(Y^2(T_\varep)+\frac{C'_1}{\beta-\Lambda_1}\Big).
 \end{align*}
 
Therefore, we obtain the inequality in \eqref{yexp1} for some constant $C_1>0$, but only for all $t\ge T_\varep$. 
Combining this with the boundedness of $|y(t)|$ on $[0,T_\varep]$, we then obtain estimate \eqref{yexp1} for all $t\ge 0$ with an adjusted constant $C_1>0$.

\medskip
\noindent \textit{Proof of \eqref{yexp2}.} By equation \eqref{yraw}, the second inequality in \eqref{LY}, and \eqref{RFy}, we have
 \beqs
 \frac12\ddt Y^2
 \ge -\Lambda_d Y^2 -\varep Y^2=- (\Lambda_d + \varep) Y^2 \quad  \forall t>T_\varep.
 \eeqs
 Hence,
 \beqs
 Y^2(t)\ge Y^2(T_\varep) e^{-2(\Lambda_d + \varep)(t-T_\varep)}\quad \forall t\ge T_\varep.
 \eeqs
 
By the virtue of \eqref{bwu}, $|y(t)|> 0$ for all $t\ge 0$.  
It follows that  
\beq \label{ylowexp}
 |y(t)|^2\ge c_0^{-1}Y^2(t)\ge c_0^{-2}|y(T_\varep)|^2 e^{-2(\Lambda_d + \varep)(t-T_\varep)}
 =C'_2 e^{-2(\Lambda_d + \varep)t} \quad \forall t\ge T_\varep,
 \eeq 
where $C'_2>0$. Since $y\in C([0,T_\varep],\R^d)$ and $|y(t)| > 0$ on $[0,T_\varep]$, one has $|y(t)|$ it is bounded below by a positive constant on $[0,T_\varep]$. Combining this fact with estimate \eqref{ylowexp} for $t\ge T_\varep$, we obtain the all-time estimate \eqref{yexp2}.
\end{proof}

The lower bound \eqref{yexp2} in Proposition \ref{nzprop} can be derived by using results for abstract problems in infinite dimensional spaces such as \cite[Theorems 1.1 and 1.2]{Ghidaglia1986b}, see also \cite{CohenLees}. Nonetheless, the proof above is included for being self-contained and simple.  

As discussed in the Introduction, the next theorem either follows the proof of \cite[Proposition 3]{FS84a}, or is a consequence of \cite[Theorem 1.1]{Ghidaglia1986a}. However, the proof presented below uses a new method, which may be useful in other problems.

\begin{theorem}\label{thmD2}
Let $y(t)$ be a non-trivial, decaying solution of \eqref{geneq}. Then there exist an eigenvalue $\lambda_*$ of $A$ and a corresponding eigenvector $\xi_*$ such that
\beq\label{firstrate}
|y(t)-e^{-\lambda_* t} \xi_*|=\bigo(e^{-(\lambda_*+\delta) t})\text{ for some }\delta>0.
\eeq
\end{theorem}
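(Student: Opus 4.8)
The plan is to locate the \emph{optimal} (largest) exponential decay rate of $y$ and show it must coincide with an eigenvalue of $A$, from which the eigenvector falls out. Set
\[
\mu_* = \sup\{\mu > 0 : |y(t)| = \bigo(e^{-\mu t})\}.
\]
Proposition \ref{nzprop} pins this number down: the upper bound \eqref{yexp1} puts $\lambda_1=\Lambda_1$ in the set, while the lower bound \eqref{yexp2} prevents any $\mu>\Lambda_d=\lambda_{d_*}$ from lying in it, so $\lambda_1\le\mu_*\le\lambda_{d_*}$. Since the defining property is monotone in $\mu$, we have $|y(t)|=\bigo(e^{-\mu t})$ for every $\mu<\mu_*$.

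Next I would set up a per-mode Duhamel representation. Writing $u_k=R_{\lambda_k}y$ and applying $R_{\lambda_k}$ to \eqref{geneq} (using \eqref{Pc}) gives $u_k'+\lambda_k u_k=R_{\lambda_k}F(t,y)$, hence
\[
u_k(t) = e^{-\lambda_k t}u_k(0) + \int_0^t e^{-\lambda_k(t-s)} R_{\lambda_k}F(s,y(s))\,ds.
\]
Whenever $|y|=\bigo(e^{-\mu t})$, condition \eqref{Fcond} forces $|R_{\lambda_k}F(s,y(s))|=\bigo(e^{-(1+\alpha)\mu s})$ with $(1+\alpha)\mu>\mu$. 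Splitting the Duhamel integral according to whether the forcing decays faster or slower than the mode (pushing the tail to $+\infty$ in the former case, estimating directly in the latter) yields a clean dichotomy: for $\lambda_k<\mu$ one gets $u_k(t)=c_k e^{-\lambda_k t}+\bigo(e^{-(1+\alpha)\mu t})$, but since $|u_k|\le\|R_{\lambda_k}\|\,|y|=\bigo(e^{-\mu t})$ with $\lambda_k<\mu$, the coefficient $c_k$ must vanish, so $u_k=\bigo(e^{-(1+\alpha)\mu t})$; and for $\lambda_k>\mu$ one gets $u_k(t)=\bigo(e^{-\rho_k t})$ with $\rho_k=\min(\lambda_k,(1+\alpha)\mu)>\mu$ (the boundary case $\lambda_k=(1+\alpha)\mu$ produces only a harmless polynomial factor, absorbed by lowering the rate by an arbitrarily small amount).

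The crux is to show $\mu_*\in\sigma(A)$. Suppose not; then $\mu_*$ lies in an open gap $(\lambda_j,\lambda_{j+1})$. Choosing $\mu\in(\lambda_j,\mu_*)$ close to $\mu_*$, no eigenvalue equals $\mu$ and $|y|=\bigo(e^{-\mu t})$, so the dichotomy above gives $|u_k|=\bigo(e^{-\rho_k t})$ with every $\rho_k>\mu$; taking the finite minimum produces a rate $\mu'\ge\min((1+\alpha)\mu,\lambda_{j+1})$. Because $(1+\alpha)\mu_*>\mu_*$ and $\lambda_{j+1}>\mu_*$, taking $\mu$ close enough to $\mu_*$ makes $\mu'>\mu_*$, giving $|y|=\bigo(e^{-\mu' t})$ with $\mu'>\mu_*$ and contradicting the definition of $\mu_*$. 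Hence $\mu_*=\lambda_*$ for some eigenvalue $\lambda_*$. I expect this bootstrap step — verifying that the gain always overshoots $\mu_*$ and handling the resonant boundary case — to be the main obstacle.

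Finally, to extract the eigenvector I would rerun the mode analysis with $\mu$ just below $\lambda_*=\mu_*$. The modes $\lambda_k<\lambda_*$ contribute $\bigo(e^{-(1+\alpha)\mu t})$, the modes $\lambda_k>\lambda_*$ decay at a rate strictly above $\lambda_*$, and the single resonant mode satisfies $R_{\lambda_*}y(t)=\xi_* e^{-\lambda_* t}+\bigo(e^{-(1+\alpha)\mu t})$ with $\xi_*$ in the range of $R_{\lambda_*}$; by \eqref{Pc}, $A\xi_*=\lambda_*\xi_*$. Summing over modes produces $y(t)=e^{-\lambda_* t}\xi_*+\bigo(e^{-(\lambda_*+\delta)t})$ for some $\delta>0$, which is \eqref{firstrate}. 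It remains to check $\xi_*\ne0$: if $\xi_*=0$ then $|y|=\bigo(e^{-(\lambda_*+\delta)t})$ with $\lambda_*+\delta>\mu_*$, once more contradicting the maximality of $\mu_*$. Thus $\xi_*$ is a genuine eigenvector, completing the proof.
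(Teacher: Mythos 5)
Your proposal is correct, but it takes a genuinely different route from the paper's proof. The paper also avoids Dirichlet quotients, but instead of characterizing the critical rate as a supremum over a continuum, it builds a discrete ladder of candidate rates --- the countable set $S'$ in \eqref{Srate}, consisting of sums of eigenvalues plus non-negative integer multiples of $\alpha\lambda_1$ --- and climbs it: at each rung $\nu_{n+1}$ it rescales $w_n=e^{\nu_{n+1}t}y$ and invokes the approximation Lemma \ref{exp-odelem} with $p\equiv 0$, which by \eqref{qxi} hands over, for free, a vector $\xi_{n+1}$ with $(A-\nu_{n+1}I_d)\xi_{n+1}=0$ together with an improved decay estimate; the climb stops at the first nonzero $\xi$, and the lower bound \eqref{yexp2} forces it to stop after finitely many rungs. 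You instead set $\mu_*=\sup\{\mu>0:|y(t)|=\bigo(e^{-\mu t})\}$, re-derive the needed linear estimates by hand via per-mode Duhamel formulas (in effect inlining the special case of Lemma \ref{exp-odelem} that the paper cites as a black box), prove $\mu_*\in\sigma(A)$ by the spectral-gap bootstrap, and read $\xi_*$ off the resonant mode --- with \eqref{yexp2} entering twice in your argument, once to make $\mu_*$ finite and once to force $\xi_*\ne 0$. What each approach buys: yours is self-contained and identifies $\lambda_*$ intrinsically as the optimal decay rate, at the cost of the gap argument, which is exactly the step the paper's lemma makes automatic (any rung of the ladder producing a nonzero limit is certified an eigenvalue by the equation $(A-\nu I_d)\xi=0$, so no contradiction argument about spectral gaps is ever needed). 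The paper's ladder, on the other hand, is deliberately shaped like the rate sets $\tilde S$ and $S$ of Definition \ref{SS1}, and its reliance on Lemma \ref{exp-odelem} makes the proof of Theorem \ref{thmD2} a warm-up for the induction that drives the main expansion in Theorem \ref{thm3}.
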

\begin{proof}
Define the set
\beq\label{Srate}
S'=\left\{ \sum_{j=1}^n \lambda'_j +m \alpha \lambda_1:\text{ for any numbers } n\in\N, \lambda'_j\in\sigma(A),  0\le m\in\Z\right\}.
\eeq

The set $S'$ can be arranged as a strictly increasing sequence $\{\nu_n\}_{n=1}^\infty$.
Note that $\nu_1=\lambda_1$ and $\nu_n\to\infty$ as $n\to\infty$.
For any $n\in\N$, one has $\nu_n+\alpha \lambda_1 >\nu_n$ and $\nu_n+\alpha \lambda_1 \in S'$. Hence, by the strict increase of $\nu_n$'s, we have
\beq\label{nunu}
\nu_n+\alpha \lambda_1 \ge \nu_{n+1}.
\eeq

\textit{Step 1.} First, by Proposition \ref{nzprop}, $|y(t)|\le Ce^{-\nu_1 t}$. Let $w_0(t)=e^{\nu_1 t} y(t)$. Then $w_0(t)$ satisfies
\beq\label{wz}
w_0'(t)+(A-\nu_1 I_d)w_0(t)=g_1(t)\eqdef e^{\nu_1 t}F(t,y(t)). 
\eeq
We estimate the right-hand side 
\beq\label{g1}
|g_1(t)|\le C e^{\nu_1 t}|y(t)|^{1+\alpha} \le C e^{\nu_1 t}e^{-\nu_1(1+\alpha) t}= \bigo(e^{-\alpha \nu_1 t}).
\eeq

By equation \eqref{wz} and estimate \eqref{g1}, we can apply Lemma \ref{exp-odelem} to $y(t)=w_0(t)$ and $p(t)\equiv 0$.
Then, by and \eqref{qxi}, there exists a vector $\xi_1\in\R^d$ and a number $\varep_1>0$ such that
\begin{align}\label{zx2}
A\xi_1&=\nu_1\xi_1,\\
\label{zx1}
|w_0(t)-\xi_1|&=\bigo(e^{-\varep t}), \text{ that is }, |y(t)-e^{-\nu_1 t}\xi_1|=\bigo(e^{-(\nu_1+\varep_1) t}).
\end{align}

\textit{Step 2.} Set 
$M=\{n\in\N:|y(t)|=\bigo(e^{-(\nu_n+\delta)t}) \text{ for some }\delta>0\}$.

Suppose $n\in M$. Let $w_n(t)=e^{\nu_{n+1}t} y(t)$. Then
\beq\label{wn}
w_n'(t)+(A-\nu_{n+1} I_d)w_n(t)=g_{n+1}(t)\eqdef e^{\nu_{n+1} t}F(t,y(t)).
\eeq

To estimate the last term, we note from \eqref{nunu} that $\nu_n(1+\alpha)\ge \nu_n+\lambda_1\alpha \ge \nu_{n+1}$.
Then, for large $t$,
\beq \label{gn1}
|g_{n+1}(t)|\le Ce^{\nu_{n+1} t} |y(t)|^{1+\alpha}\le Ce^{\nu_{n+1} t} e^{-(\nu_n+\delta)(1+\alpha)t}|=\bigo(e^{-\delta(1+\alpha) t}).
\eeq

By \eqref{wn} and \eqref{gn1}, we, again, can apply Lemma \ref{exp-odelem} to $y(t)=w_n(t)$ and $p(t)\equiv 0$. Then, by \eqref{qxi},  there exists a vector $\xi_{n+1}\in\R^d$ and a number $\varep>0$ such that
\begin{align*}
A\xi_{n+1}&=\nu_{n+1}\xi_{n+1},\\
|w_n(t)-\xi_{n+1}|&=\bigo(e^{-\varep t}),\text{ that is },|y(t)-e^{-\nu_{n+1}t}\xi_{n+1}|=\bigo(e^{-(\nu_{n+1}+\varep) t}) . 
\end{align*}

\textit{Step 3.} If the vector $\xi_1$ in Step 1 is not zero, then, thanks to \eqref{zx1} and \eqref{zx2}, the theorem is proved with $\lambda_*=\lambda_1$ and $\xi_*=\xi_1$.

Now, consider $\xi_1= 0$.  By \eqref{zx1} with $\xi_1=0$, one has $1\in M$, hence $M$ is a non-empty subset of  $\N$. By \eqref{yexp2} and the fact $\nu_n\to\infty$, the set $M$ must be finite. 
Let $k$ be the maximum number of $M$, and $n_0=k+1$. By the result in Step 2 applied to $n=k$, there exist $\xi_{n_0}\in\R^d$ and $\varep>0$  such that
\begin{align}
\label{Ano}
A\xi_{n_0}&=\nu_{n_0} \xi_{n_0},\\
\label{yno}
|y(t)-e^{-\nu_{n_0} t}\xi_{n_0}|&=\bigo(e^{-(\nu_{n_0} +\varep) t}).
\end{align}

If $\xi_{n_0}=0$, then \eqref{yno} implies $n_0\in M$, which is a contradiction.
Thus, $\xi_{n_0}\ne 0$, which, together with \eqref{Ano},   implies  $\lambda_*=\nu_{n_0}$ is an eigenvalue and $\xi_*=\xi_{n_0}$ is a corresponding eigenvector of $A$. Then, estimate \eqref{firstrate} follows \eqref{yno}.
\end{proof}

\begin{remark}\label{compare}
We compare the above proof of Theorem \ref{thmD2} with Foias--Saut's proof in \cite{FS84a}. We recall from \cite{FS84a} that the Dirichlet quotient $Ay(t)\cdot y(t)/|y(t)|^2$ is proved to converge, as $t\to\infty$, to an eigenvalue $\lambda_*$ of $A$ first,
and then, based on this, the two limits $e^{\lambda_* t}R_{\lambda_*} y(t)\to \xi_*\ne 0$ and $e^{\lambda_* t}(I_d-R_{\lambda_*}) y(t)\to 0$ are established. This original proof is rather lengthy and requires delicate analysis of the asymptotic behavior of $y(t)/|y(t)|$, see \cite[Proposition 1]{FS84a}.
We, instead, do not use the Dirichlet quotient to determine the exponential rate, but create the set $S'$ of possible rates, see \eqref{Srate}, and find the first $\lambda_*\in S'$ such that  $e^{\lambda_* t}|y(t)|$ does not decay exponentially. 
Then, by the virtue of approximation lemma \ref{exp-odelem},  estimate \eqref{firstrate}  is established without analyzing $y(t)/|y(t)|$.  This idea, in fact, is inspired by Foias--Saut's proof in \cite{FS87} of the asymptotic expansion \eqref{FSx}. However, we restrict it solely to the problem of first asymptotic approximation, and hence make it significantly simpler.
\end{remark}

\section{The series expansion}\label{secE}

In this section, we focus on obtaining the asymptotic expansion, as $t\to\infty$, for solutions of equation \eqref{vtoy}.
Regarding the equation's nonlinearity, we assume the following.
 
\begin{assumption}\label{assumpG}  The mapping $F:\R^d\to\R^d$ has the the following properties. 
\begin{enumerate}[label=\tnum]
 \item \label{FL} $F$ is locally Lipschitz on $\R^d$ and $F(0)=0$.
 \item\label{GG} Either \ref{h1} or \ref{h2} below is satisfied.

    \begin{enumerate}[label={\rm (H\arabic*)}]
        \item\label{h1} There exist numbers $\beta_k$'s, for $k\in\N$, which belong to $(1,\infty)$ and  increase strictly to infinity, and functions $F_k\in \mathcal H_{\beta_k}(\R^d)\cap C^\infty(\R^d_0)$, for $k\in\N$,  such that it holds, for any $N\in\N$, that  
        \beq\label{errF}
            \left|F(x)-\sum_{k=1}^N F_k(x)\right|=\bigo(|x|^{\beta})\text{ as $x\to 0$, for some $\beta>\beta_N$.}
        \eeq

        \item\label{h2} There exist $N_*\in\N$, strictly increasing  numbers $\beta_k$'s in $(1,\infty)$, 
        and functions $F_k\in \mathcal H_{\beta_k}(\R^d)\cap C^\infty(\R^d_0)$, for $k=1,2,\ldots,N_*$, such that  
        \beq\label{errF2}
            \left|F(x)-\sum_{k=1}^{N_*} F_k(x)\right|=\bigo(|x|^{\beta})\text{ as  $x\to 0$, for all $\beta>\beta_{N_*}$.}
        \eeq
    \end{enumerate}
\end{enumerate}
\end{assumption}

In Assumption \ref{assumpG}\ref{GG}, we conveniently write case \ref{h1} as
\beq\label{Gex}
F(x)\sim \sum_{k=1}^\infty F_k(x),
\eeq
and case \ref{h2} as
\beq\label{Gef}
F(x)\sim \sum_{k=1}^{N_*} F_k(x).
\eeq

The following remarks on Assumption \ref{assumpG} are in order.
\begin{enumerate}[label=\rnum]
 \item Applying \eqref{Fzero} and \eqref{Fhbound} to each  function $F_k$, one has 
$$F_k(0)=0,\quad \|F_k\|_{\mathcal H_{\beta_k}}<\infty,\text{ and } |F_k(x)|\le \|F_k\|_{\mathcal H_{\beta_k}}|x|^{\beta_k} \text{ for all $x\in\R^d$.}$$
Hence, \eqref{errF} indicates that the remainder $F(x)-\sum_{k=1}^N F_k(x)$ between $F(x)$ and its  approximate sum $\sum_{k=1}^N F_k(x)$ is small, as $x\to 0$, of a higher order (of $|x|$) than that in the approximate sum $\sum_{k=1}^N F_k(x)$. 

 \item With functions $F_k$'s as in \ref{h2} of Assumption \ref{assumpG}, if $F(x)=\sum_{k=1}^{N_*} F_k(x)$, then $F$ satisfies \eqref{Gef}.
For the relation between \eqref{Gex} and \eqref{Gef}, see Remark \ref{rmk44} below.

 \item By the remark \ref{pp} after Definition \ref{phom},  if $F$ is a $C^\infty$-vector field on the entire space $\R^d$ with $F(0)=0$ and $F'(0)=0$, then $F$ satisfies Assumption \ref{assumpG} with the right-hand side of \eqref{Gex} is simply the Taylor expansion of $F(x)$ about the origin.  
 
 \item Note that we do not require the convergence of the formal series on the right-hand side of \eqref{Gex}.
Even when the convergence occurs, the limit is not necessarily the function $F$. For instance, if $h:\R^d\to\R^d$ satisfies $|x|^{-\alpha} h(x)\to 0$ as $x\to 0$ for all $\alpha>0$, then $F$ and $F+h$ have the same expansion \eqref{Gex}.

 \item The class of functions $F$'s that satisfy Assumption \ref{assumpG} contains much more than smooth vector fields, see section \ref{specific} below.
\end{enumerate}

By Assumption \ref{assumpG}, for each $N\in\N$ in case of \eqref{Gex}, or $ N\in\N\cap[1, N_*]$ in case of \eqref{Gef}, there is $\varep_N>0$ such that
\beq\label{Ner}
\Big|F(x)-\sum_{k=1}^N F_k(x)\Big|=\bigo(|x|^{\beta_N+\varep_N})\text{ as } x\to 0.
\eeq

Note from \eqref{Ner} with $N=1$ that, as $x\to 0$, 
\beqs
|F(x)|\le |F_1(x)|+|F(x)-F_1(x)|\le \|F_1\|_{\mathcal H_{\beta_1}}|x|^{\beta_1}+\bigo(|x|^{\beta_1+\varep_1})=\bigo(|x|^{\beta_1}).
\eeqs
Thus, there exist numbers $c_*,\varep_*>0$ such that 
\beq \label{Gyy}
|F(x)|\le c_*|x|^{\beta_1} \quad\forall x\in\R^d \text{ with } |x|<\varep_*.
\eeq

\medskip
By property \eqref{Gyy} and Assumption \ref{assumpG}, function $F$ satisfies conditions in Assumption \ref{assumpF}. Therefore, the facts about trivial and non-trivial solutions in section \ref{secD} still applies to equation \eqref{vtoy}, and Theorem \ref{thmD2} holds true for solutions of  \eqref{vtoy}.

Hereafter, $y(t)$ is a non-trivial, decaying solution of \eqref{vtoy}. 

Let eigenvalue $\lambda_*=\lambda_{n_0}$ and its corresponding eigenvector $\xi_*$ be as in Theorem \ref{thmD2}. It follow \eqref{firstrate} that 
\beq\label{vraw}
 |y(t)|= \bigo( e^{-\lambda_* t} ). 
\eeq

To describe the exponential rates in a possible asymptotic expansion of solution $y(t)$ we use the following sets $\tilde S$ and $S$.

\begin{definition}\label{SS1}

We define a set $\tilde S\subset [0,\infty)$ as follows.

In the case of \eqref{Gex}, let $\alpha_k=\beta_k-1>0$ for $k\in\N$, and
\beq\label{deftilS}
\begin{aligned}
\tilde S=\Big\{& \sum_{k=n_0}^{d_*} m_k(\lambda_k-\lambda_*)+\sum_{j=1}^\infty z_j \alpha_j \lambda_*:m_k, z_j\in \Z_+,\\
& \text{ with $z_j> 0$ for only finitely many $j$'s} \Big\}.
\end{aligned}
\eeq

In the case of \eqref{Gef}, let $\alpha_k=\beta_k-1>0$ for $k=1,2,\ldots,N_*$, and 
\beq\label{tilfS}
\tilde S=\Big\{\sum_{k=n_0}^{d_*} m_k(\lambda_k-\lambda_*)+\sum_{j=1}^{N_*} z_j \alpha_j \lambda_*:m_k, z_j\in \Z_+\Big\}.
\eeq

In both cases,  the set $\tilde S$ has countably, infinitely many elements. 
Arrange $\tilde S$ as a sequence $(\tilde \mu_n)_{n=1}^\infty$ of non-negative and strictly increasing numbers.
Set 
\beq\label{defMu1}
\mu_n=\tilde \mu_n+\lambda_* \text{ for $n\in\N$, and define } S=\{\mu_n:n\in\N\}.
\eeq
\end{definition}

\medskip
The set $\tilde S$ has the following elementary properties.
\begin{enumerate}[label=\rnum]

\item For $n_0\le \ell\le d_*$, by choose $m_k=\delta_{k\ell}$, and $z_j=0$ for all $j$ in \eqref{deftilS} or \eqref{tilfS}, we have $\lambda_\ell-\lambda_*\in\tilde S$. Hence, 
\beq \label{ldS}  \lambda_\ell\in S\text{ for all }\ell=n_0,n_0+1,\ldots,d_*.
\eeq

\item Clearly, $\tilde \mu_1=0$ and $\mu_1=\lambda_*$. The numbers $\mu_n$'s are positive and strictly increasing. Also,
\beq\label{mulim}
\tilde\mu_n\to\infty \text{ and }\mu_n\to\infty \text{ as } n\to\infty.
\eeq

\item For all $x,y\in \tilde S$ and $k\in\N$, one has
\beq\label{Sprop} x+y,\ x+\alpha_k\lambda_*\in\tilde S.\eeq 
As a consequence of \eqref{Sprop}, one has  
\beq\label{muN}
\tilde \mu_n + \alpha_k \lambda_* \ge \tilde \mu_{n+1} \text{ for all } n,k.
\eeq
\end{enumerate}

Let $r\in \N$ and $s\in\Z_+$.  Since $F_r$ is a $C^\infty$-function in a neighborhood of $\xi_*\ne 0$, we have the following Taylor's expansion, for any $h\in\R^d$,
\beq\label{Taylor}
F_r(\xi_*+h)=\sum_{m=0}^s \frac1{m!}D^mF_r(\xi_*)h^{(m)}+g_{r,s}(h),
\eeq
where $D^m F_r(\xi_*) $ is the $m$-th order derivative of $F_r$ at  $\xi_*$, and
\beq\label{grs}
g_{r,s}(h)=\bigo(|h|^{s+1})\text{ as } h\to 0.
\eeq

For $m\ge 0$, denote
\beq\label{Frm}
\mathcal F_{r,m}=\frac1{m!}D^m F_r(\xi_*).
\eeq

When $m=0$, \eqref{Frm} reads as $\mathcal F_{r,0}=F_r(\xi_*)$. When $m\ge 1$, $\mathcal F_{r,m}$ is an $m$-linear mapping from $(\R^d)^m$ to $\R^d$.

By \eqref{multiL}, one has, for any $r,m\ge 1$, and $y_1,y_2,\ldots,y_m\in\R^d$, that
\beq\label{multineq}
|\mathcal F_{r,m}(y_1,y_2,\ldots,y_m)|\le \|\mathcal F_{r,m}\|\cdot |y_1|\cdot |y_2|\cdots |y_m|.
\eeq

For our convenience, we write inequality \eqref{multineq} even when $m=0$ with $\|\mathcal F_{r,0}\|\eqdef |F_r(\xi_*)|$.

Our main result is the following theorem.

\begin{theorem}\label{thm3}
There exist polynomials $q_n$: $\R\to \R^d$ such that  $y(t)$ has an asymptotic expansion, in the sense of Definition \ref{Sexpand}, 
\beq\label{vx}
y(t)\sim \sum_{n=1}^\infty q_n(t)e^{-\mu_n t} \text { in }\R^d,
\eeq
where $\mu_n$'s are defined in Definition \ref{SS1}, and $q_n(t)$ satisfies, for any $n\ge 1$,
 \beq\label{qneq}
q_{n}'+(A-\mu_{n}I_d)q_{n} 
=\mathcal J_n
\eqdef \sum_{\substack{r\ge1,m \ge 0,k_1,k_2,\ldots,k_{m} \ge 2,\\  
\sum_{j=1}^{m} \tilde\mu_{k_j}+\alpha_r\lambda_*=\tilde\mu_n}} \mathcal F_{r,m}(q_{k_1},q_{k_2},\ldots,q_{k_{m}})
\text{ in $\R$.}
\eeq
\end{theorem}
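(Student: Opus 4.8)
The plan is to construct the polynomials $q_n$ and establish \eqref{vx} by induction on $N$, following the scheme of Foias--Saut but with all Taylor expansions re-centered at $\xi_*$. The induction hypothesis $(P_N)$ states that polynomials $q_1,\dots,q_N$ satisfying \eqref{qneq} for $n\le N$ have been found and that
\[
\Big|y(t)-\sum_{n=1}^N q_n(t)e^{-\mu_n t}\Big|=\bigo(e^{-(\mu_N+\varepsilon_N)t})
\]
for some $\varepsilon_N>0$. For the base case $N=1$ I would take $q_1\equiv\xi_*$: because $\tilde\mu_{k_j}>0$ for $k_j\ge2$ and $\alpha_r\lambda_*>0$, the index set in $\mathcal J_1$ is empty, so \eqref{qneq} reduces to $(A-\lambda_*I_d)\xi_*=0$, which holds, while the estimate is exactly \eqref{firstrate}.

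For the inductive step I would put $v(t)=y(t)-\sum_{n=1}^N q_n(t)e^{-\mu_n t}$ and $w(t)=e^{\mu_{N+1}t}v(t)$. Differentiating, using \eqref{vtoy} and the recursion \eqref{qneq} for $n\le N$, one finds
\[
w'+(A-\mu_{N+1}I_d)w=G(t),\qquad G(t)\eqdef e^{\mu_{N+1}t}\Big[F(y(t))-\sum_{n=1}^N \mathcal J_n(t)e^{-\mu_n t}\Big].
\]
The crux is to prove $G(t)=\mathcal J_{N+1}(t)+\bigo(e^{-\varepsilon t})$ for some $\varepsilon>0$. First I would replace $F$ by $\sum_{k=1}^K F_k$, choosing $K$ large (via $\beta_k\to\infty$, or via \eqref{errF2} in the finite case) so that the error is $\bigo(e^{-(\mu_{N+1}+\varepsilon)t})$ after multiplication by $e^{\mu_{N+1}t}$. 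For each $k$, positive homogeneity gives $F_k(y(t))=e^{-\beta_k\lambda_* t}F_k(e^{\lambda_* t}y(t))$; since $e^{\lambda_* t}y(t)=\xi_*+h(t)$ with $h(t)=\sum_{n=2}^N q_n(t)e^{-\tilde\mu_n t}+\bigo(e^{-(\tilde\mu_N+\varepsilon_N)t})\to0$, I would expand $F_k$ about $\xi_*$ through \eqref{Taylor} to a high order $s$. Writing $\beta_k\lambda_*=\lambda_*+\alpha_k\lambda_*$, the $m$-th Taylor term $\mathcal F_{k,m}(h^{(m)})$ then carries the rate $\lambda_*+\alpha_k\lambda_*$ plus the rates of its $m$ factors of $h$.

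Expanding the multilinear maps $\mathcal F_{k,m}(h^{(m)})$ and collecting by exponential rate $\alpha_k\lambda_*+\sum_j\tilde\mu_{k_j}$, which lies in $\tilde S$ by \eqref{Sprop}, I would regroup the finite expansion as $e^{-\lambda_* t}\sum_\ell \mathcal J_\ell(t)e^{-\tilde\mu_\ell t}$ plus remainders. The key bookkeeping point is that for each fixed $\ell$ the defining sum of $\mathcal J_\ell$ in \eqref{qneq} is finite — the constraints force $\alpha_r\lambda_*\le\tilde\mu_\ell$ and $m\,\tilde\mu_2\le\tilde\mu_\ell$, bounding $r$ and $m$ — so once $K$ and $s$ are large enough the truncated expansion reproduces the true $\mathcal J_\ell$ for all $\ell\le N+1$, with the factors $q_{k_j}$ that occur being precisely those with $2\le k_j\le N$. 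The terms with $\ell\le N$ cancel against $\sum_{n=1}^N\mathcal J_n e^{-\mu_n t}$, the term $\ell=N+1$ yields $\mathcal J_{N+1}(t)$, and everything with $\ell\ge N+2$ is of strictly higher order. The remainders — the truncation of $F$, the Taylor remainders $g_{k,s}=\bigo(|h|^{s+1})$ (handled by taking $s$ large), and every product in the expansion containing at least one inductive-remainder factor $\bigo(e^{-(\tilde\mu_N+\varepsilon_N)t})$ — must all decay faster than $e^{-\mu_{N+1}t}$. The genuinely delicate family is the last, and it is controlled exactly by \eqref{muN}: the corresponding contribution to $G$ decays at a rate $\ge\alpha_1\lambda_*+\tilde\mu_N+\varepsilon_N-\tilde\mu_{N+1}\ge\varepsilon_N>0$.

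With $G(t)=\mathcal J_{N+1}(t)+\bigo(e^{-\varepsilon t})$ in hand, I would apply Lemma \ref{exp-odelem} to $w$ with $\lambda=\mu_{N+1}$ and $p=\mathcal J_{N+1}$. Its growth hypothesis \eqref{ellams} holds because $|w(t)|=\bigo(e^{(\mu_{N+1}-\mu_N-\varepsilon_N)t})$ while $\bar\lambda<\mu_N+\varepsilon_N$: any eigenvalue below $\mu_{N+1}$ is either below $\lambda_*=\mu_1\le\mu_N$ or, by \eqref{ldS}, equals some $\mu_i\le\mu_N$. The lemma returns a unique polynomial $q_{N+1}$ solving \eqref{qneq} for $n=N+1$ with $|w(t)-q_{N+1}(t)|=\bigo(e^{-\varepsilon' t})$, which unwinds to $(P_{N+1})$. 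Since $\mu_n\to\infty$, letting $N\to\infty$ gives \eqref{vx} in the sense of Definition \ref{Sexpand}. I expect the main obstacle to be exactly the rate bookkeeping of the regrouping step above: verifying that the coefficient of $e^{-\mu_{N+1}t}$ is precisely $\mathcal J_{N+1}$ and that all three families of remainders are genuinely higher order, for which \eqref{muN} is the essential structural input.
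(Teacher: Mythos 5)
Your proposal is correct and follows essentially the same route as the paper's proof: induction on $N$, rescaling by $e^{\lambda_* t}$ so that the Taylor expansions of the truncated $F_k$'s can be centered at $\xi_*$, regrouping by rates in $\tilde S$ with the finiteness of each $\mathcal J_\ell$ and \eqref{muN} controlling the remainders, verification of \eqref{ellams} via \eqref{ldS}, and Lemma \ref{exp-odelem} producing $q_{N+1}$. The only (cosmetic) difference is that you subtract $\sum_{n\le N}\mathcal J_n e^{-\mu_n t}$ at the outset using the induction hypothesis, whereas the paper keeps $\sum_k(Ay_k+y_k')$ and cancels it at the end after identifying the regrouped coefficients $J_k$ with $\mathcal J_k$.
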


We clarify the notation in Theorem \ref{thm3}.
\begin{enumerate}[label=\rnum]
\item In case of assumption \eqref{Gex}, the index $r$ in $\mathcal J_n$ is taken over the whole set $\N$.
In case of assumption \eqref{Gef}, the index $r$ in $\mathcal J_n$ is restricted to $1,2,\ldots,N_*$, thus, we explicitly have 
\beq\label{Jnf}
\mathcal J_n=\sum_{r=1}^{N_*} \sum_{\substack{m \ge 0,k_1,k_2,\ldots,k_{m} \ge 2,\\  
\sum_{j=1}^{m} \tilde\mu_{k_j}+\alpha_r\lambda_*=\tilde\mu_n}} \mathcal F_{r,m}(q_{k_1},q_{k_2},\ldots,q_{k_{m}}). 
\eeq

\item When $m=0$, the terms $q_{k_j}$'s in $\mathcal J_n$ are not needed, see the explanation after \eqref{Frm}, hence the condition $k_j\ge 2$ is ignored, and the corresponding terms in $\mathcal J_n$ becomes 
\beq\label{mzero} 
\sum F_r(\xi_*) \text{ for } \alpha_r \lambda_*=\tilde\mu_n,\text{ that is, } \beta_r \lambda_*=\mu_n.
\eeq
Thus, we rewrite \eqref{qneq} more explicitly, by considering $m=0$ and $m\ge 1$ for $\mathcal J_n$, as
\beq\label{qmzero}
q_{n}'+(A-\mu_{n}I_d)q_{n} 
= \sum_{r\ge 1,\alpha_r\lambda_*=\tilde\mu_n} F_r(\xi_*)+ \sum_{\substack{r\ge 1, m \ge 1,k_1,k_2,\ldots,k_{m} \ge 2,\\  
\sum_{j=1}^{m} \tilde\mu_{k_j}+\alpha_r\lambda_*=\tilde\mu_n}} \mathcal F_{r,m}(q_{k_1},q_{k_2},\ldots,q_{k_{m}}). 
\eeq

Note, in \eqref{mzero}, that such an index $r$ may or may not exists. In the latter case, the term is understood to be zero. In the former case, $r$ is uniquely determined and we have only one term.

\item  When $n=1$, we have $\tilde\mu_1=0$, and there are no indices satisfying constraints for the sum in $\mathcal J_1$. Hence $\mathcal J_1=0$, and    \eqref{qneq} becomes
\beq \label{q1Eq}
 q_1' + (A-\mu_1 I_d)q_1 =0,
\eeq

\item  Consider $n=2$. If $m\ge 1$, then, for the second sum on the right-hand side of \eqref{qmzero}, one has  at least $\tilde \mu_{k_1}\ge \tilde\mu_2$. Hence 
$\tilde\mu_{k_j}+\alpha_r\lambda_*> \mu_{k_1} \ge \tilde\mu_2.$
Therefore, the last condition for the indices in the second sum on the right-hand side of \eqref{qmzero} is not met.
Thus, \eqref{qmzero} becomes 
\beqs
 q_2' + (A-\mu_2 I_d)q_2 =\mathcal J_2
 =\sum_{r\ge 1,\alpha_r\lambda_*=\tilde\mu_2} F_r(\xi_*)=\sum_{r\ge 1,\beta_r\lambda_*=\mu_2} F_r(\xi_*).
\eeqs

\item\label{checkfin}  We verify that the sum in $\mathcal J_n$  is a finite sum.

Let $n\ge 2$. Firstly, the indices in the sum of $\mathcal J_n$ satisfy
\beqs 
\tilde\mu_n=\sum_{j=1}^{m} \tilde \mu_{k_j}+\alpha_r\lambda_*\ge \alpha_r\lambda_*=\alpha_r \mu_1.
\eeqs
Then 
\beq\label{d1}
\alpha_r\le \tilde\mu_n/\mu_1.
\eeq 

Secondly, for $m\ge 1$, one has
\beqs
\tilde\mu_n=\sum_{j=1}^m \tilde\mu_{k_j}+\alpha_r\lambda_*>\sum_{j=1}^m \mu_{k_j} \ge m\tilde\mu_2,
\eeqs
which yields
\beq\label{d2}
m<\tilde\mu_n/\tilde\mu_2.
\eeq
Note that condition \eqref{d3} is not met for $n=2$ and $m\ge 1$.

Thirdly, $\tilde\mu_n=\sum_{j=1}^m \tilde\mu_{k_j}+\alpha_r\lambda_*> \tilde\mu_{k_j}$, which yields
\beq\label{d3}
k_j < n.
\eeq 
Hence, the terms $q_{k_j}$'s in \eqref{qneq} come from previous steps.     

By \eqref{d1}, \eqref{d2}, \eqref{d3}, the sum in $\mathcal J_n$ is over only finitely many $r$'s, $m$'s and $k_j$'s. 

\item For $n\ge 2$, suppose $r^*,m^*,k^*$ are non-negative integers such that 
\beq\label{sumcond} 
\alpha_{r^*}\ge \tilde \mu_n/\mu_1,\  m^*\ge \tilde\mu_n/\tilde\mu_2,\ k^*\ge n-1.
\eeq 
Then $\mathcal J_n$ can be equivalently written as 
\beq\label{finitesum}
\mathcal J_n=\sum_{r=1}^{r^*} \sum_{m=0}^{m^*} \sum_{\substack{2\le k_1,k_2,\ldots,k_m \le k^*, \\ \sum_{j=1}^m \tilde\mu_{k_j}+\alpha_r\mu_1=\tilde\mu_n}}\mathcal F_{r,m}(q_{k_1},q_{k_2},\ldots,q_{k_{m}}). 
\eeq

Clearly, the right-hand side of \eqref{finitesum} is a part of the sum in $\mathcal J_n$, and the converse is also true thanks to \eqref{d1}, \eqref{d2} and \eqref{d3} above. Thus, the sums on both sides of \eqref{finitesum} are the same.

\item In case of \eqref{Gef} and $n\ge 2$,  $\mathcal J_n$ is given by \eqref{Jnf}, and  relation \eqref{finitesum} under condition \eqref{sumcond} can be recast as
\beq\label{fsf}
\mathcal J_n=\sum_{r=1}^{N_*} \sum_{m=0}^{m^*} \sum_{\substack{2\le k_1,k_2,\ldots,k_m \le k^*, \\ \sum_{j=1}^m \tilde\mu_{k_j}+\alpha_r\mu_1=\tilde\mu_n}}\mathcal F_{r,m}(q_{k_1},q_{k_2},\ldots,q_{k_{m}}),
\eeq
for any non-negative integers $m^*,k^*$ satisfying  
\beq\label{scf} 
m^*\ge \tilde\mu_n/\tilde\mu_2\text{ and } k^*\ge n-1.
\eeq 
\end{enumerate}

\medskip
We are ready to prove Theorem \ref{thm3}  now.

\begin{proof}[Proof of Theorem \ref{thm3}]
We will prove for the case \eqref{Gex} first, and then make necessary changes for the case \eqref{Gef} later.

\medskip
\noindent\underline{Part A: Proof for the case of \eqref{Gex}.}
For any $N\in \N$, we denote by $(\mathcal T_N)$ the following statement: {\it There exist $\R^d$-valued polynomials $q_1(t)$, $q_2(t)$, \dots, $q_N(t)$ such that equation \eqref{qneq} holds true for $n=1,2,\ldots, N$, and  
 \beq\label{inhypo}
\Big|y(t) - \sum_{n=1}^N q_n(t)e^{-\mu_n t}\Big|=\bigo(e^{-(\mu_N+\delta_N)t})\quad \text{as }t\to\infty,
\eeq
for some $\delta_N>0$. }

We will prove $(\mathcal T_N)$  for all $N\in\N$ by induction in $N$.  

\medskip
\textbf{First step}  (N=1). By Theorem \ref{thmD2} and the fact $\mu_1=\lambda_*$, the statement $(\mathcal T_1)$ is true with $q_1(t)=\xi_*$ for all $t\in\R$, and some $\delta_1>0$.

\medskip\textbf{Induction step.} Let $N\ge 1$. Suppose there are polynomials $q_n$'s for $1\le n\le N$ such that the statement $(\mathcal T_N)$ holds true. 

For $n=1,\ldots, N$, let $y_n(t)=q_n(t) e^{-\mu_n t}$, $u_n(t)=y(t)-\sum_{k=1}^n y_k(t)$.
By induction hypotheses,  the polynomials $ q_n$'s satisfy \eqref{q1Eq}, \eqref{qneq} and
\beq\label{uNbo}
  u_N(t)= \bigo(e^{-(\mu_N + \delta_N)t}).
\eeq

Let $w_N(t)=e^{\mu_{N+1} t}u_N(t)$. We derive the differential equation for $w_N(t)$. 
\begin{align*}
w_N'-\mu_{N+1}w_N&=u_N'e^{\mu_{N+1} t}=(y'-\sum_{k=1}^N y_k')e^{\mu_{N+1} t}
=(-Ay + F(y) -\sum_{k=1}^N y_k' )e^{\mu_{N+1} t}\\ 
&=\Big(-Au_N -\sum_{k=1}^N A y_k + F(y) -\sum_{k=1}^N y_k' \Big)e^{\mu_{N+1} t} .
\end{align*}
Thus 
\beq\label{eqN}
w_N'+(A-\mu_{N+1}I_d) w_N=e^{\mu_{N+1} t} F(y) -e^{\mu_{N+1} t} \sum_{k=1}^N (A y_k+y_k').
\eeq

By \eqref{mulim}, we can choose a number $r_*\in\N$ such that 
\beq \label{pchoice}
\beta_{r_*}\ge \mu_{N+1}/\mu_1, \text{ which is equivalent to }
\alpha_{r_*}\ge \tilde\mu_{N+1}/\mu_1.
\eeq 

By \eqref{Ner},  one has 
\beq\label{Fcut}
F(x)=\sum_{r=1}^{r_*} F_r (x)+\bigo(|x|^{\beta_{r_*}+\varep_{r_*} }) \text{ as }x\to 0.
\eeq

Using \eqref{Fcut} with $x=y(t)$ and utilizing property \eqref{vraw}, we write the first term on the right-hand side of \eqref{eqN} as 
\beqs
e^{\mu_{N+1} t}F(y(t))
=e^{\mu_{N+1} t}\sum_{r=1}^{r_*} F_r (y(t))+e^{\mu_{N+1} t}\bigo(|y(t)|^{\beta_{r_*}+\varep_{r_*} })
=E(t)+e^{\mu_{N+1} t}\bigo(e^{-\lambda_*(\beta_{r_*}+\varep_{r_*})t}),
\eeqs
where 
\beq\label{T1T2}
E(t)=e^{\mu_{N+1} t}\sum_{r=1}^{r_*} F_r (y(t)).
\eeq
Because of condition for $\beta_{r_*}$ in \eqref{pchoice}, we then have 
\beq\label{Fsplit}
e^{\mu_{N+1} t}F(y(t))=E(t)+\bigo(e^{-\tilde\delta_N t}),\text{ where $\tilde\delta_N=\lambda_*\varep_{r_*}$.}
\eeq

The term $\sum_{r=1}^{r_*} F_r(y)$ in \eqref{T1T2} will be calculated as below.
For $k=1,\ldots,N$, denote
\beqs
\tilde y_k(t)= y_k(t) e^{\lambda_* t} = q_k(t) e^{-\tilde \mu_k t} \text { and } \tilde u_k(t)= u_k(t) e^{\lambda_* t}.
\eeqs

When $2\le k\le N$, one has  
\beq\label{Ovktilde1}
\tilde y_k(t)= q_k(t) e^{-\tilde \mu_k t} =\bigo(e^{-(\tilde\mu_k-\varepsilon)t}) \text{ for any $\varepsilon \in(0,\tilde\mu_k)$.}
\eeq
  
By \eqref{uNbo},
 \beq\label{Ountilde1}
\tilde u_N(t)= u_N(t) e^{\lambda_* t}= \bigo(e^{-(\tilde\mu_N + \delta_N)t} ).
\eeq
Also, from $(\mathcal T_{1})$, we similarly have 
\beq\label{util1}
\tilde u_1(t)= u_1(t) e^{\lambda_* t}=\bigo(e^{-\delta_1 t} ).
\eeq

Then
\beq\label{Frv}
F_r(y(t))= F_r(y_1 + u_1)
= F_r\big(e^{-\lambda_* t}(\xi_* + \tilde u_1 ) \big)
= e^{-\beta_r\lambda_* t} F_r(\xi_* + \tilde u_1).
\eeq

Let $s_*\in\N$ satisfy 
\beq \label{schoice} 
s_*\delta_1 + \beta_1\lambda_*\geq \mu_{N+1}
\text{ and } s_*\ge \tilde\mu_{N+1}/\tilde\mu_2. 
\eeq 

By Taylor's expansion \eqref{Taylor} with $s=s_*$, using the notation in \eqref{Frm},   
\beq\label{Frxi}
F_r (\xi_* + \tilde u_1)
 = \sum_{m=0}^{s_*} \mathcal F_{r,m}\tilde u_1^{(m)}+ g_{r,s_*}(\tilde u_1).
\eeq

It follows \eqref{Frv} and \eqref{Frxi} that
\beq\label{Fvex1}
F_r(y(t))=e^{-\beta_r\lambda_* t}\left(F_r(\xi_*)+\sum_{m=1}^{s_*} \mathcal F_{r,m}\tilde u_1^{(m)}\right)
+e^{-\beta_r\lambda_* t} g_{r,s_*}(\tilde u_1).
\eeq 

The terms in \eqref{Fvex1} are further calculated as follows. 

\medskip
For the last term in \eqref{Fvex1}, by using \eqref{grs}, \eqref{util1} and the first condition in \eqref{schoice}, we find that
\begin{align}\label{eg}
e^{-\beta_r\lambda_* t}  g_{r,s_*}(\tilde u_1)
&=e^{-\beta_r\lambda_* t}  \bigo(|\tilde u_1(t)|^{s_*+1})
=e^{-\beta_r\lambda_* t}  \bigo(e^{-\delta_1(s_*+1)t}) \notag\\
&=  \bigo(e^{-(\beta_1\lambda_*+\delta_1s_*+\delta_1)t})=\bigo ( e^{-(\mu_{N+1}+\delta_1)t}).
\end{align}

\medskip
For the remaining terms on the right-hand side of \eqref{Fvex1}, we write 
\begin{align}
\mathcal F_{r,m}\tilde u_1^{(m)}
&=\mathcal F_{r,m}\Big(\sum_{k=2}^N \tilde y_k +\tilde u_N\Big)^{(m)}
=\mathcal F_{r,m}\Big(\sum_{k=2}^N \tilde y_k +\tilde u_N,\sum_{k=2}^N \tilde y_k +\tilde u_N,\ldots,\sum_{k=2}^N \tilde y_k +\tilde u_N\Big) \notag\\
&=\mathcal F_{r,m}\Big(\sum_{k=2}^N \tilde y_k\Big)^{(m)}+\sum_{\rm finitely\ many}\mathcal F_{r,m}(z_1,\ldots,z_N). \label{u1m}
\end{align}

Note, in the case $N=1$, that the sum $\sum_{k=2}^N \tilde y_k$ and, hence, the term  $\mathcal F_{r,m}(\sum_{k=2}^N \tilde y_k)^{(m)} $ are not present in the calculations in \eqref{u1m}.
In the last sum of \eqref{u1m}, each $z_1,\ldots,z_N$ is either $\sum_{k=2}^N \tilde y_k$ or $\tilde u_N$, and at least one of $z_j$'s must be $\tilde u_N$.
By inequality \eqref{multineq}, estimate \eqref{Ovktilde1} for $\tilde y_k$, and estimates \eqref{Ountilde1}, \eqref{util1} for $\tilde u_N$, we have
\beqs
|\mathcal F_{r,m}(z_1,\ldots,z_N)|\le \|\mathcal F_{r,m}\| \cdot |z_1| \ldots |z_N| = \bigo(|\tilde u_N|)= \bigo(e^{-(\tilde\mu_N+\delta_N)t} ). 
\eeqs
Therefore,
\begin{align*}
\sum_{m=0}^{s_*} \mathcal F_{r,m}\tilde u_1^{(m)}
&=F_r(\xi_*) + \sum_{m=1}^{s_*} \mathcal F_{r,m}\Big(\sum_{k=2}^N \tilde y_k\Big)^{(m)} + \bigo(e^{-(\tilde\mu_N+\delta_N)t} )\\
&=\sum_{m=0}^{s_*} \sum_{k_1,\ldots,k_m \ge 2}^N\mathcal F_{r,m}(\tilde y_{k_1},\tilde y_{k_2},\ldots, \tilde y_{k_m})
+ \bigo(e^{-(\tilde\mu_N+\delta_N)t} )\\
&=\sum_{m=0}^{s_*} \sum_{k_1,\ldots,k_m \ge 2}^N e^{-t\sum_{j=1}^m \tilde\mu_{k_j}}\mathcal F_{r,m}(q_{k_1},q_{k_2},\ldots, q_{k_m}) 
+  \bigo(e^{-(\tilde\mu_N+\delta_N)t} ). 
\end{align*}

Thus,
\beq\label{esumF}
\begin{aligned}
e^{-\beta_r\lambda_* t} \sum_{m=0}^{s_*} \mathcal F_{r,m}\tilde u_1^{(m)}
&=\sum_{m=0}^{s_*} \sum_{k_1,\ldots,k_m=2}^Ne^{-t(\sum_{j=1}^m \tilde\mu_{k_j}+\beta_r\lambda_*)}\mathcal F_{r,m}(q_{k_1},q_{k_2},\ldots, q_{k_m})\\
&\quad +\bigo(e^{-(\tilde\mu_N+\beta_r\lambda_*+\delta_N)t} ). 
\end{aligned}
\eeq 

Again, in the case $N=1$, the last double summation has only one term corresponding to $m=0$, which is $F_r(\xi_*)$.

Using property \eqref{muN}, we have 
\beqs 
\tilde\mu_N + \beta_r\lambda_*+ \delta_N=
\tilde\mu_N + \alpha_r\lambda_*+\lambda_*+ \delta_N\ge \tilde \mu_{N+1}+\lambda_*+\delta_N=\mu_{N+1}+\delta_N. 
\eeqs 
Hence, the last term in \eqref{esumF} can be estimated as 
\beq\label{ee}
\bigo(e^{-(\tilde\mu_N + \beta_r\lambda_* +\delta_N )t} )= \bigo(e^{-(\mu_{N+1} +\delta_N )t} ).
\eeq 

Therefore, by formula of $E(t)$ in \eqref{T1T2}, and \eqref{Fvex1}, \eqref{eg}, \eqref{esumF}, \eqref{ee}, we have 
\beq\label{Tmore}
E(t)= e^{\mu_{N+1} t} \Big(J+\bigo(e^{-(\mu_{N+1} +\delta_N )t} )+\bigo(e^{-(\mu_{N+1} +\delta_1 )t} )\Big)
=e^{\mu_{N+1} t}J+\bigo(e^{-\min\{\delta_1,\delta_N\} t} ),
\eeq
where
\beq\label{Jdef}
J=\sum_{r=1}^{r_*}\sum_{m=0}^{s_*} \sum_{k_1,\ldots,k_m \ge 2}^Ne^{-t(\sum_{j=1}^m \tilde\mu_{k_j}+\beta_r\lambda_*)}\mathcal F_{r,m}(q_{k_1},q_{k_2},\ldots, q_{k_m}). 
\eeq

Denote $\mu= \tilde\mu_{k_1}+\ldots +\tilde\mu_{k_m}+\alpha_r\lambda_* $. 
When $m=0$, one has $\mu=\alpha_r\lambda_*$, which belongs to  $\tilde{S}$.
When $m\ge 1$, by property \eqref{Sprop}, $\mu$ also belongs to $\tilde{S}$.
Clearly, $\mu>0=\tilde \mu_1$.
Thus, in both cases of $m$,  the number $\mu$ must equal $\tilde\mu_p$ for a unique $p\ge 2$. 
Because of the indices $r,m,k_1,\ldots,k_m$ being finitely many, there are only finitely many such numbers $p$'s. Thus, there is  $p_*\in\N$ such that any index $p$ above satisfies $p\le p_*$.
Hence, the exponent in \eqref{Jdef} is
\beq \label{mul}
\sum_{j=1}^m \tilde\mu_{k_j}+\beta_r\lambda_*=\mu+\lambda_*=\tilde\mu_p+\lambda_*=\mu_p  \quad \text{ for some integer }  p\in[2,p_*].
\eeq

Using index $p$ in \eqref{mul}, we can split the sum in $J$ into two parts corresponding to $p \le N+1$ and $p\ge N+2$.
We then write  $J=S_1+S_2$,
where 
\begin{align*}
S_1&=\sum_{p=2}^{N+1}\sum_{r=1}^{r_*}\sum_{m=0}^{s_*}  \sum_{\substack{2\le k_1,\ldots,k_m\le N,\\  \sum_{j=1}^m \tilde\mu_{k_j}+\beta_r\lambda_*=\mu_p}}
e^{-\mu_p t} \mathcal F_{r,m}(q_{k_1},q_{k_2},\ldots, q_{k_m}),\\
S_2&=\sum_{p=N+2}^{p_*}\sum_{r=1}^{r_*}\sum_{m=0}^{s_*} \sum_{\substack{2\le k_1,\ldots,k_m\le N, \\ \sum_{j=1}^m \tilde\mu_{k_j}+\beta_r\lambda_*=\mu_p}} e^{-\mu_p t}\mathcal F_{r,m}(q_{k_1},q_{k_2},\ldots, q_{k_m}).
\end{align*}

We re-write $S_1=\sum_{k=2}^{N+1} e^{-\mu_k t} J_k$, where 
\beq\label{Jk}
J_k=\sum_{r=1}^{r_*} \sum_{m=0}^{s_*} \sum_{\substack{2\le k_1,\ldots,k_{m} \le N,\\ \sum_{j=1}^m \tilde\mu_{k_j}+\beta_r\lambda_*= \mu_k}} \mathcal F_{r,m}(q_{k_1},q_{k_2},\ldots, q_{k_{m}}) \text{ for $k=1,2,\dots,N+1$.}
\eeq

We estimate $S_2$. Set $\delta_N'=\min\{\tilde\delta_N,\delta_1,\delta_N,(\mu_{N+2}-\mu_{N+1})/2\}>0$.
Using inequality \eqref{multineq} to estimate $|\mathcal F_{r,m}(q_{k_1},q_{k_2},\ldots, q_{k_m})|$, and recalling that $q_{k_j}$'s are polynomials in $t$, we have 
\beqs
|\mathcal F_{r,m}(q_{k_1},q_{k_2},\ldots, q_{k_m})|
\le \|\mathcal F_{r,m}\| \cdot|q_{k_1}|\cdot |q_{k_2}|\ldots |q_{k_m}|  =\bigo(e^{\delta_N' t}).
\eeqs
For $e^{-\mu_p t}$, we use $\mu_p\ge \mu_{N+2}$, and obtain 
\beq\label{S2f}
S_2=\bigo( e^{-\mu_{N+2}t}e^{\delta_N' t})=\bigo( e^{-(\mu_{N+1}+\delta_N')t}).
\eeq 

Combining the above calculations from \eqref{Tmore} to \eqref{S2f} gives
\beq\label{T1e}
E(t) =e^{\mu_{N+1} t}\sum_{k=2}^{N+1}e^{-\mu_k t}J_k+\bigo( e^{-\delta_N' t}).
\eeq

Thus, by \eqref{eqN}, \eqref{Fsplit} and \eqref{T1e},
\beqs
w_N'+(A-\mu_{N+1}I_d) w_N
= \Big(  \sum_{k=2}^{N+1} e^{-\mu_k t}J_k-\sum_{k=1}^N (A y_k+y_k') \Big)e^{\mu_{N+1} t}
+\bigo( e^{-\delta_N' t}).
\eeqs

Using the fact $Ay_k+y_k'=e^{-\mu_k t} (q_k'+(A-\mu_k I_d)q_k)$, for $k=1,2,\ldots,N$, we deduce
\beq \label{wndiff}
w_N'+(A-\mu_{N+1}I_d) w_N
=-e^{\mu_{N+1} t}\sum_{k=1}^N e^{-\mu_k t }\chi_k + J_{N+1}
+\bigo( e^{-\delta_N' t}),
\eeq 
where
\beqs
\chi_1=q_1'+(A-\mu_1 I_d)q_1,\quad \chi_k= q_k'+(A-\mu_k I_d)q_k-J_k\text{ for } 2\le k\le N.
\eeqs

We already know $\chi_1=0$. Let us focus on the sum $\sum_{k=1}^N e^{-\mu_k t}\chi_k$ on the right-hand side of \eqref{wndiff}.
In case $N=1$, this sum is already zero.

Consider $N\ge 2$.  Note that condition $\sum_{j=1}^m \tilde\mu_{k_j}+\beta_r\lambda_*= \mu_k$ in formula \eqref{Jk} of $J_k$ is equivalent to 
$\sum_{j=1}^m \tilde\mu_{k_j}+\alpha_r\lambda_*= \tilde\mu_k$.
Then, for each $k=1,2,\ldots,N+1$, by the virtue of relation \eqref{finitesum} for $n=k\le N+1$, $r^*=r_*$, $m^*=s_*$ and $k^*=N$, one has
\beq\label{Jkrel}
J_k= \mathcal J_k \text{ for $k=1,2,\ldots,N+1$ .}
\eeq
Above, condition \eqref{sumcond} is met thanks to the condition for $\alpha_{r_*}$ in \eqref{pchoice}, the second condition for $s_*$ in \eqref{schoice}, and the fact $N\ge k-1$.

Thanks to \eqref{Jkrel} and  the induction hypothesis, $\chi_k=0$ for $2\le k\le N$.
Hence, \eqref{wndiff} becomes 
\beq\label{diffwn}
\begin{aligned}
&w_N'+(A-\mu_{N+1}I_d) w_N=J_{N+1}+\bigo( e^{-\delta_N' t}).
\end{aligned}
\eeq

Note that  $\mu_{N+1}> \mu_1 \ge \lambda_1$.
Let $\lambda_i$ is an eigenvalue of $A$ with $\lambda_i<\mu_{N+1}$. 
If $\lambda_i \le \lambda_{n_0}=\mu_1$ then $\lambda_i\le \mu_N$.
If $\lambda_i>\lambda_{n_0}$, then, according to property \eqref{ldS}, $\lambda_i\in S$,  hence, by the constraint  $\lambda_i<\mu_{N+1}$, we have $\lambda_i\le \mu_N$.
Therefore, in both cases  
\beqs
 e^{(\lambda_i-\mu_{N+1})t }|w_N(t)| = e^{\lambda_it }|u_N(t)| = e^{\lambda_i t}\bigo(e^{-(\mu_N+\delta_N)t}) =\bigo(e^{-\delta_N t}).
\eeqs
That is, condition \eqref{ellams} is satisfied.

Applying Lemma \ref{exp-odelem} to the equation \eqref{diffwn}, there exists polynomial $q_{N+1}:\R\to\R^d$ and a number $\delta_{N+1}>0$ such that
\beq\label{WN}
|w_N(t)-q_{N+1}(t)|=\bigo(e^{-\delta_{N+1} t}).
\eeq
Moreover $q_{N+1}(t)$ solves
\beqs
q_{N+1}'+(A-\mu_{N+1}I_d)q_{N+1} = J_{N+1}=\mathcal J_{N+1}, 
\eeqs
that is, equation \eqref{qneq} holds for $n=N+1$.

Multiplying \eqref{WN} by $e^{-\mu_{N+1}t}$ gives
 \beqs
\Big|y(t) - \sum_{n=1}^{N+1} q_n(t)e^{-\mu_n t}\Big|=\bigo(e^{-(\mu_{N+1}+\delta_{N+1})t}),
\eeqs
which proves \eqref{inhypo} for $N:=N+1$.

Hence the statement  $(\mathcal T_{N+1})$ holds true.

\medskip\textbf{Conclusion for Part A.} By the induction principle, the statement $(\mathcal T_{N})$ holds true for all $N\in\N$.
Note also that, the polynomials $(\mathcal T_{N+1})$ are exactly the ones from $(\mathcal T_{N})$. Hence, the polynomials $q_n$'s exist for all $n\in\N$, for which $(\mathcal T_{N})$ holds true for all $N\in\N$.
Therefore, we obtain the desired expansion \eqref{vx}.

\medskip
\noindent\underline{Part B: Proof for the case of \eqref{Gef}.} 
We follow the  proof in Part A with the following adjustments.
The number $r_*$ is simply $N_*$, and condition \eqref{pchoice} for $r_*$  is not required anymore. All the sum $\sum_{r\ge 1}$ appearing in the proof that involves $F_r$ or $\mathcal F_{r,m}$  will be replaced with $\sum_{1\le r\le N_*}$. From \eqref{Fcut} to the end of the proof in Part A, positive number $\varep_{r_*}$ is arbitrary, and number $\beta_{r_*}$ in calculations from \eqref{Fcut} to \eqref{Fsplit} is replaced with any number $\beta_*\ge \mu_{N+1}/\mu_1$. Then \eqref{Fcut} still holds true thanks to \eqref{errF2}.
We also take into account that $\mathcal J_n$ is given by \eqref{Jnf}, and one has relation \eqref{fsf} under condition \eqref{scf}.
With these changes, the above proof in Part A goes through, and we obtain the desired statement for this case \eqref{Gef}.

\medskip    
The proof of Theorem \ref{thm3} is now complete.
\end{proof}

\begin{remark}\label{rmk44}
 Assume we have \eqref{Gef}, then by adding more functions $F_k=0$ and numbers $\beta_k$'s, for $k>N_*$, such that $\beta_k$ increases strictly to infinity, one can convert \eqref{Gef} into \eqref{Gex}. (For example, one can take $\beta_k=\beta_{N_*}+k$ for $k>N_*$.) However, we did not use this fact in Part B of the proof of Theorem \ref{thm3} above. The reason is to have simpler constructions of $\widetilde S$ and $q_n$'s in \eqref{tilfS} and \eqref{Jnf} for the case \eqref{Gef}, as opposed to  \eqref{deftilS} and \eqref{qneq} if it is converted to \eqref{Gex}. 
\end{remark}

\section{Extended results}\label{finapprox}

In this section, we extend Theorem \ref{thm3} to the situations that require less of the function $F$. 

First, we consider the case when the function $F$ in \eqref{vtoy} only has a finite sum approximation. We will find a finite sum  asymptotic approximation for decaying solutions of \eqref{vtoy}.

Assume function $F$ satisfies \ref{FL} and \ref{h2} of Assumption \ref{assumpG}  with \eqref{errF2} being replaced with 
\beq\label{errF3}
\Big|F(x)-\sum_{k=1}^{N_*} F_k(x)\Big|=\bigo(|x|^{\beta_{N_*}+\bar\varep}) \text{ as $x\to 0$, for some number  $\bar\varep>0$.} 
\eeq

Note that \eqref{errF3} is different from \eqref{errF2} due to the restriction of $\bar\varep$.
Also, we usually think of $\bar\varep$ as a small number, but, in \eqref{errF3}, it can be large. This happens when the remainder $F(x)-\sum_{k=1}^{N_*} F_k(x)$ may have very precise approximation, i.e., large $\bar\varep$, but it does not have a homogeneous structure that we can take advantage of. 

From \eqref{errF3}, one can see that estimate \eqref{Ner} still holds for all $N\in\N\cap [1,N_*]$, where $\delta_N$ in any number in $(0,\beta_{N+1}-\beta_N)$ when $N<N_*$, and is $\bar\varep$ when $N=N_*$. Consequently, \eqref{Gyy} is still valid, and the facts and results in section \ref{secD} apply.

Let $y(t)$ be a non-trivial, decaying solution of \eqref{vtoy}.
Applying Theorem \ref{thmD2}, we have the first approximation \eqref{firstrate}.

For more precise approximations, define sets $\tilde S$ and $S$ by \eqref{tilfS} and \eqref{defMu1}, respectively.

Let $\bar N\in\N$ be defined by 
\beq\label{opNbar}
\bar N=\max\{N\in\N: \lambda_*(\beta_{N_*}+\bar\varep)>\mu_N\}.
\eeq

From the definition of $ \tilde S$, we see that $\alpha_{N_*} \lambda_*  \in  \tilde S$. Therefore, there exists a unique number $N'\in \N$ such that  $\alpha_{N_*} \lambda_*  = \tilde \mu_{N'}$, which is equivalent to 
$\mu_{N'} =  \beta_{N_*} \lambda_*$. The last expression gives $\mu_{N'}>\lambda_*=\mu_1$,  thus, one must have $N'\ge 2 $.
Note that $N'$ belongs to the set on the right-hand side of \eqref{opNbar}, then $\bar N\ge N'\ge 2$.

We obtain the finite approximation for decaying solutions under the assumption  \eqref{errF3} as follows.
\begin{theorem}\label{thm51}
There exist $\R^d$-valued polynomials $q_n(t)$'s, for $1\le n\le \bar N$, and a number $\delta>0$ such that  
\beq \label{finapp}
\Big|y(t)-\sum_{n=1}^{\bar N} q_n(t) e^{-\mu_n t}\Big|=\bigo(e^{-(\mu_{\bar N}+\delta)t}),
\eeq
where each polynomial $q_n(t)$, for $1\le n\le \bar N$, satisfies equation
 \beq\label{qneqf}
q_{n}'+(A-\mu_{n}I_d)q_{n} 
=\sum_{r=1}^{N_*} \sum_{\substack{m \ge 0,k_1,k_2,\ldots,k_{m} \ge 2,\\  
\sum_{j=1}^{m} \tilde\mu_{k_j}+\alpha_r\lambda_*=\tilde\mu_n}} \mathcal F_{r,m}(q_{k_1},q_{k_2},\ldots,q_{k_{m}})
\text{ in $\R$.}
\eeq
\end{theorem}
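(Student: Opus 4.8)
The plan is to adapt the inductive argument from Part B of the proof of Theorem \ref{thm3} (the case \eqref{Gef}), running it only finitely many times and tracking precisely where the finite approximation \eqref{errF3} forces the process to terminate. Observe first that the recurrence \eqref{qneqf} coincides with \eqref{Jnf}, so the polynomials to be constructed are required to satisfy exactly the same equations as in the \eqref{Gef} case; in particular, the finiteness verification in item \ref{checkfin} and the rewriting \eqref{fsf} under \eqref{scf} remain available without change.

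For each $1\le N\le\bar N$, I would formulate the statement $(\mathcal T_N)$ exactly as in Part A of the proof of Theorem \ref{thm3}: there exist $\R^d$-valued polynomials $q_1,\dots,q_N$ satisfying \eqref{qneqf} for $n=1,\dots,N$ together with the error bound \eqref{inhypo} for some $\delta_N>0$. The base case $N=1$ follows from Theorem \ref{thmD2} with $q_1(t)=\xi_*$, identical to the first step there. For the induction step, I assume $(\mathcal T_N)$ with $1\le N\le\bar N-1$ and set $w_N(t)=e^{\mu_{N+1}t}u_N(t)$, obtaining the same equation \eqref{eqN}. Here lies the one genuine change: instead of selecting a large $r_*$ as in \eqref{pchoice}, I take $r_*=N_*$ and use \eqref{errF3} in place of \eqref{Fcut}, so the remainder term becomes
\[
e^{\mu_{N+1}t}\bigo(|y(t)|^{\beta_{N_*}+\bar\varep})=\bigo\!\left(e^{-(\lambda_*(\beta_{N_*}+\bar\varep)-\mu_{N+1})t}\right),
\]
and \eqref{Fsplit} holds with $\tilde\delta_N=\lambda_*(\beta_{N_*}+\bar\varep)-\mu_{N+1}$. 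The whole point is that $\tilde\delta_N>0$ precisely because $N+1\le\bar N$, by the definition \eqref{opNbar} of $\bar N$ and the monotonicity of $\mu_n$.

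From this point the computations \eqref{T1T2}--\eqref{T1e}, with every sum $\sum_{r=1}^{r_*}$ replaced by $\sum_{r=1}^{N_*}$ and $s_*$ chosen as in \eqref{schoice}, carry over verbatim, yielding equation \eqref{diffwn} for $w_N$ whose right-hand side is $J_{N+1}+\bigo(e^{-\delta_N't})$ with $\delta_N'=\min\{\tilde\delta_N,\delta_1,\delta_N,(\mu_{N+2}-\mu_{N+1})/2\}$. The identity $J_k=\mathcal J_k$ is then read off from \eqref{fsf} under \eqref{scf}, exactly as in Part B. After verifying the decay condition \eqref{ellams} through property \eqref{ldS} as before, I apply Lemma \ref{exp-odelem} to \eqref{diffwn} to produce the polynomial $q_{N+1}$ solving \eqref{qneqf} for $n=N+1$ and to establish \eqref{inhypo} for $N+1$, i.e. $(\mathcal T_{N+1})$. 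The induction therefore runs for $N=1,\dots,\bar N-1$ and terminates after producing $q_{\bar N}$, which gives \eqref{finapp} with $\delta=\delta_{\bar N}$.

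The main obstacle, and really the only substantive difference from Theorem \ref{thm3}, is the bookkeeping of the remainder rate $\tilde\delta_N$: one must confirm that $\lambda_*(\beta_{N_*}+\bar\varep)-\mu_{N+1}$ stays strictly positive throughout the induction, which is exactly what the definition \eqref{opNbar} guarantees for $N+1\le\bar N$ and what fails at $N+1=\bar N+1$. This is precisely the mechanism by which the purely finite approximation \eqref{errF3} caps the expansion at the $\bar N$-th term; everything else is a truncated replica of the infinite-series argument.
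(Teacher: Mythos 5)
Your proposal is correct and follows essentially the same route as the paper's own proof: a finite induction modeled on the proof of Theorem \ref{thm3}, taking $r_*=N_*$, replacing \eqref{Fcut} by \eqref{errF3} so that $\tilde\delta_N=\lambda_*(\beta_{N_*}+\bar\varep)-\mu_{N+1}$, and observing that the definition \eqref{opNbar} of $\bar N$ guarantees $\tilde\delta_N\ge\lambda_*(\beta_{N_*}+\bar\varep)-\mu_{\bar N}>0$ throughout the induction. Your identification of this positivity as the precise mechanism that caps the expansion at the $\bar N$-th term matches the paper exactly.
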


\begin{proof}
We follow Part A of the proof of Theorem \ref{thm3}, with some changes similar to those in Part B.

First, we take $r_*=N_*$, $1\le r\le N_*$ and replace $\varep_{r_*}$ with number $\bar\varep$ in \eqref{errF3}.  

Second, we replace condition \eqref{pchoice} with
$\lambda_*(\beta_{r_*}+\bar\varep)>\mu_{\bar N}$,
which is satisfied by definition of $\bar N$ in \eqref{opNbar}. 

Third, for $1\le N\le \bar N-1$, the calculations \eqref{Fcut}--\eqref{Fsplit} are still valid with number $\tilde\delta_N$ in \eqref{Fsplit} being changed to 
$\tilde\delta_N=\lambda_*(\beta_{r_*}+\bar\varep)-\mu_{N+1}$.
Note that $\tilde\delta_N\ge \lambda_*(\beta_{r_*}+\bar\varep)-\mu_{\bar N}>0$.

We do finite induction in $N$ for $1\le N\le \bar N$ and obtain $(\mathcal T_{\bar N})$, which, by \eqref{inhypo}, yields
\eqref{finapp}. Here, each polynomial $q_n(t)$, for $1\le n\le \bar N$, satisfies equation \eqref{qneq} with $\mathcal J_n$ being given by \eqref{Jnf} particularly; that is, we obtain equation \eqref{qneqf}.
\end{proof}

Next, we relax the regularity requirements for $F$ and $F_k$'s.

Regarding $F$, its local Lipschitz property is imposed to guarantee the existence and uniqueness of solutions at least starting with small initial data. However, in some problems, $F$ is not that regular, but a small solution $y(t)$, for $t\in[0,\infty)$, already exists and is given. Then our results obtained above apply to this solution $y(t)$.

Regarding $F_k$'s, what we need in the proofs of Theorems \ref{thm3} and \ref{thm51} is that each $F_k$, in addition to being positively homogeneous,  has the Taylor series approximation of all orders about $\xi_*$, where $\xi_*$ is from Theorem \ref{thmD2}.
Because  $\xi_*$ depends on $y(t)$ and varies in $\R^d_0$, function $F_k$ is required in Assumption \ref{assumpG} to be smooth on the entire set $\R^d_0$. However, in many cases, $F_k$ is only known to be smooth on an open set $V$ strictly smaller than $\R^d_0$. Then one needs $\xi_*$ to belong to $V$ as well. This is possible when more information about $\xi_*$, as an eigenvector of matrix $A$, is provided.

These two points will be reflected in Theorem \ref{thm57} below.

\begin{definition}\label{notation2}
For an open set $V$ in $\R^d$, denote by $\mathcal X(V)$, respectively $\mathcal X^0(V)$, the set of locally Lipschitz  continuous, respectively  continuous, functions on $\R^d$, with approximation \eqref{Gex} or \eqref{Gef}  where $F_k\in \mathcal H_{\beta_k}(\R^d)\cap C^\infty(V)$
for all respective $k$'s.
The sets $\widehat{\mathcal X}(V)$ and $\widehat{\mathcal X}^0(V)$ are defined similarly with \eqref{errF3} replacing \eqref{Gex}   and \eqref{Gef}.
In particular, denote $\mathcal X=\mathcal X(\R^d_0)$ and $\mathcal X^0=\mathcal X^0(\R^d_0)$.
\end{definition}

Note that $\mathcal X$ is the set of functions that satisfy Assumption \ref{assumpG}. 

An extension of the results in Theorems \ref{thm3} and \ref{thm51} is the following general theorem.

\begin{theorem}\label{thm57}
Suppose that all eigenvectors of matrix $A$ belong to an open set $V$ in $\R^d$. 
\begin{enumerate}[label=\tnum]
 \item Then Theorem \ref{thm3} applies to any function $F\in\mathcal X(V)$, and Theorem \ref{thm51} applies to any function $F\in \widehat{\mathcal X}(V)$, for any non-trivial, decaying solution $y(t)$ of \eqref{vtoy}. 
 
 \item  If $F\in \mathcal X^0(V)$, respectively $F\in \widehat{\mathcal X}^0(V)$, then Theorem \ref{thm3}, respectively Theorem \ref{thm51}, still holds true for a solution $y(t)\in C^1([0,\infty))$ of \eqref{vtoy} that satisfies
 $y(t)\to 0$ as $t\to\infty$, and there is a divergent, strictly increasing sequence $(t_n)_{n=1}^\infty$ in $(0,\infty)$ such that   $y(t_n)\ne 0$ for all $n\in\N$.
\end{enumerate}
\end{theorem}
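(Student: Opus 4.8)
The plan is to verify that the proofs of Theorems \ref{thm3} and \ref{thm51} use the hypotheses on $F$ only in ways that survive the two relaxations, so that those proofs carry over with essentially no new computation.

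For part (i), I would first isolate the single place where the global smoothness of the $F_k$'s was used: the Taylor expansion \eqref{Taylor} of each $F_r$ about the point $\xi_*$, together with the remainder bound \eqref{grs}. By Theorem \ref{thmD2} the point $\xi_*$ is an eigenvector of $A$, so by hypothesis $\xi_*\in V$; since $V$ is open and $F_k\in C^\infty(V)$, each $F_r$ is $C^\infty$ on a ball centered at $\xi_*$, which is exactly what \eqref{Taylor} and \eqref{grs} require. The remaining ingredients of those proofs---the growth bound \eqref{Gyy}, the approximation \eqref{Gex} or \eqref{Gef}, and Lemma \ref{exp-odelem}---are untouched by shrinking the smoothness domain from $\R^d_0$ to $V$. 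Hence for $F\in\mathcal X(V)$ the proof of Theorem \ref{thm3} applies verbatim, and for $F\in\widehat{\mathcal X}(V)$ the proof of Theorem \ref{thm51} applies verbatim.

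For part (ii), the local Lipschitz continuity of $F$ was used earlier for only two purposes: to guarantee existence and uniqueness of solutions, and, through the backward uniqueness property \eqref{bwu}, to ensure $|y(t)|>0$ for all $t$, which underlies the lower bound \eqref{yexp2}. Since a solution $y(t)$ is now given, the first purpose is vacuous; moreover the growth bound \eqref{Gyy}, on which the differential inequalities of Proposition \ref{nzprop} rest, holds for merely continuous $F$ admitting \eqref{Gex} or \eqref{Gef}, and the threshold time $T_\varep$ in \eqref{Te} exists by virtue of $y(t)\to 0$ alone. For the second purpose I would replace \eqref{bwu} by the hypothesis that $y(t_n)\ne 0$ along a divergent sequence: fixing $\varep>0$ and the associated $T_\varep$, I select some $t_*=t_n\ge T_\varep$, so that $Y(t_*)>0$ by \eqref{Requiv}, and the forward differential inequality $\frac12\ddt Y^2\ge -(\Lambda_d+\varep)Y^2$ of Proposition \ref{nzprop} yields $Y^2(t)\ge Y^2(t_*)e^{-2(\Lambda_d+\varep)(t-t_*)}>0$ for all $t\ge t_*$. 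This delivers the lower bound \eqref{yexp2} for all large $t$, which is all that is used downstream.

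With this lower bound in hand, I would re-run the proof of Theorem \ref{thmD2}: its upper bound is the estimate \eqref{yexp1}, whose derivation never invoked non-vanishing or Lipschitz continuity; the finiteness of the set $M$ follows from the lower bound just re-established together with $\nu_n\to\infty$; and the extraction of $\xi_*$ uses only Lemma \ref{exp-odelem}. This produces the first approximation \eqref{firstrate} with an eigenvector $\xi_*\in V$. The induction in the proof of Theorem \ref{thm3}, respectively the finite induction for Theorem \ref{thm51}, then proceeds unchanged, since it relies only on \eqref{Gex} or \eqref{Gef}, the growth bound \eqref{Gyy}, the Taylor expansion of the $F_r$ about $\xi_*\in V$, and Lemma \ref{exp-odelem}, none of which needs Lipschitz continuity of $F$. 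I expect the one genuinely new point to be precisely this reconstruction of \eqref{yexp2} without uniqueness: the fact to get right is that a single nonvanishing value $y(t_*)\ne 0$ propagates forward in time through the differential inequality, so that nonvanishing along a divergent sequence---rather than at a single prescribed time---is exactly the hypothesis that does the job.
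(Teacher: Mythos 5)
Your proposal is correct and takes essentially the same approach as the paper: for (i) you observe, as the paper does, that $\xi_*\in V$ makes the Taylor expansions \eqref{Taylor} about $\xi_*$ available, and for (ii) you rebuild the lower bound of Proposition \ref{nzprop} by propagating the nonvanishing value $y(t_n)\ne 0$ forward through the differential inequality, which is exactly the paper's device of choosing $T_\varep=t_n$ for large $n$ in \eqref{Te} and \eqref{ylowexp}. Your added observation that the lower bound \eqref{yexp2} for large $t$ alone suffices downstream (to make the set $M$ finite in the proof of Theorem \ref{thmD2}) is accurate and matches the paper's use of it.
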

\begin{proof}
(i) In the proofs of Theorems \ref{thm3} and \ref{thm51}, the eigenvector $\xi_*$ belongs to $V$, and, thanks to the condition $F_k\in C^\infty(V)$, we can still use the Taylor expansions of $F_k$'s about $\xi_*$. Therefore, both proofs are unchanged and produce respective conclusions.

(ii) We re-examine Proposition \ref{nzprop}. 
Select $T_\varep=t_n$ for sufficiently large $n$ such that \eqref{Te} still holds. Then we still obtain upper bound \eqref{yexp1}.
With $y(T_\varep)=y(t_n)\ne 0$, the estimate \eqref{ylowexp} holds for some $C'_2>0$. Thus, the inequality in \eqref{yexp2} holds for all $t\ge T_\varep$. With such a lower bound of $|y(t)|$, we can still prove Theorem \ref{thmD2}.
After that, the argument in (i) continues to be valid.
\end{proof}

The sets defined in Definition \ref{notation2} and used in Theorem \ref{thm57} will be explored more in section \ref{specific} below. Here, we state their very first property.

\begin{proposition}\label{prop52}
 For any open set $V$ in $\R^d$, the sets  $\mathcal X(V)$, $\widehat{\mathcal X}(V)$ $\mathcal X^0(V)$ and $\widehat{\mathcal X}^0(V)$ are  linear spaces. 
 \end{proposition}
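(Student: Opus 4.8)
The plan is to show that each of the four sets is closed under addition and scalar multiplication; since the zero function clearly belongs to all of them (take a single vanishing homogeneous part), this yields the linear-space structure. Throughout I would fix $F,G$ in one of the sets, with respective expansions $F\sim\sum_k F_k$ and $G\sim\sum_\ell G_\ell$, where $F_k\in\mathcal H_{\beta_k}(\R^d)\cap C^\infty(V)$ and $G_\ell\in\mathcal H_{\gamma_\ell}(\R^d)\cap C^\infty(V)$. Two observations reduce most of the work to bookkeeping. First, the prescribed regularity---local Lipschitz continuity for $\mathcal X(V),\widehat{\mathcal X}(V)$ and continuity for $\mathcal X^0(V),\widehat{\mathcal X}^0(V)$---together with the normalization $F(0)=0$ is evidently inherited by $F+G$ and $cF$. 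Second, by property \ref{ps} after Definition \ref{phom} and the linearity of $C^\infty(V)$, each $\mathcal H_\beta(\R^d)\cap C^\infty(V)$ is a linear space, so sums and scalar multiples of admissible homogeneous parts of a common degree are again admissible. Hence the only genuine issue is to produce for $F+G$ an expansion with strictly increasing degrees whose remainders satisfy the required decay. Scalar multiplication is then immediate: $cF\sim\sum_k cF_k$ keeps the same degrees, and multiplying the remainder estimate by $|c|$ preserves it.

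For addition in $\mathcal X(V)$ and $\mathcal X^0(V)$, I would first invoke Remark \ref{rmk44} to assume, without loss of generality, that both $F$ and $G$ are written in the infinite-series form \eqref{Gex} (padding a finite expansion \eqref{Gef} with vanishing parts of strictly increasing degrees stays inside the set). I then merge $\{\beta_k\}$ and $\{\gamma_\ell\}$ into a single strictly increasing sequence $\delta_1<\delta_2<\cdots\to\infty$, and define the degree-$\delta_m$ part of $F+G$ to be the sum of whichever of $F_k,G_\ell$ has degree $\delta_m$ (and $0$ otherwise); by the second observation this part lies in $\mathcal H_{\delta_m}(\R^d)\cap C^\infty(V)$. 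To verify \eqref{errF} at level $M$, write
\beqs
(F+G)(x)-\sum_{m=1}^M (F+G)_m(x)=\Big(F(x)-\!\!\sum_{\beta_k\le \delta_M}\!\!F_k(x)\Big)+\Big(G(x)-\!\!\sum_{\gamma_\ell\le \delta_M}\!\!G_\ell(x)\Big).
\eeqs
For the first bracket, if $\beta_K\le\delta_M<\beta_{K+1}$, I would add and subtract $F_{K+1}$ and apply \eqref{errF} for $F$ at level $K+1$, obtaining a bound $\bigo(|x|^{\beta_{K+1}})$ with $\beta_{K+1}>\delta_M$; the second bracket is identical. Thus the remainder is $\bigo(|x|^\beta)$ for some $\beta>\delta_M$, which is exactly \eqref{errF} for $F+G$. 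Since the merged sequence is infinite, $F+G$ satisfies \ref{h1}, so it belongs to the same set.

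The delicate case is $\widehat{\mathcal X}(V)$ and $\widehat{\mathcal X}^0(V)$, where the expansions are finite and the remainder gain $\bar\varep$ in \eqref{errF3} is a fixed positive number, so the naive merged expansion can fail: if $G$ has a smaller top degree $\gamma_{N_G}$ but only a small gain $\bar\varep_G$, the combined remainder need not decay past the larger top degree $\beta_{N_F}$ coming from $F$. My plan is therefore to \emph{truncate} rather than keep every merged part. Set $\rho=\min\{\beta_{N_F}+\bar\varep_F,\ \gamma_{N_G}+\bar\varep_G\}$ and retain in the expansion of $F+G$ only those merged parts of degree strictly below $\rho$, relegating the remaining higher-degree parts to the remainder. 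Each discarded part of degree $\delta\ge\rho$ is itself $\bigo(|x|^\delta)=\bigo(|x|^\rho)$ near the origin, while the two full remainders are $\bigo(|x|^{\beta_{N_F}+\bar\varep_F})$ and $\bigo(|x|^{\gamma_{N_G}+\bar\varep_G})$, both $\bigo(|x|^\rho)$; hence the total remainder is $\bigo(|x|^\rho)$. Since every retained degree is $<\rho$, and $\rho>\min\{\beta_1,\gamma_1\}=\delta_1$ guarantees that at least the lowest part is kept, the truncated expansion is nonempty with top degree $<\rho$, so \eqref{errF3} holds with positive gain $\bar\varep=\rho-(\text{top retained degree})$.

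This truncation---balancing the two fixed remainder rates against the merged degrees---is the step I expect to be the main obstacle, since it is the only place where one cannot simply take the union of the data. Once it is in place, the remaining verifications (admissibility of each retained homogeneous part in $\mathcal H_{\delta_m}(\R^d)\cap C^\infty(V)$ and the required regularity) are routine, exactly as for the unhatted sets, and closure under scalar multiplication is as noted above. Combining the two cases gives the claim for all four sets.
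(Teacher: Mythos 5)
Your proposal is correct. For $\mathcal X(V)$ and $\mathcal X^0(V)$ it is essentially the paper's own argument: reduce to the infinite form \eqref{Gex} via Remark \ref{rmk44}, merge the two degree sequences into one strictly increasing sequence, insert zero functions so that both expansions run over the merged degrees, and add componentwise; your level-$(K+1)$ trick for verifying \eqref{errF} merely makes explicit a check the paper leaves implicit. Where you genuinely depart from the paper is in the hatted sets. The paper disposes of $\widehat{\mathcal X}(V)$ and $\widehat{\mathcal X}^0(V)$ with the phrase ``the other sets can be proved similarly,'' but, as you observe, the naive merge is \emph{not} similar there: since the gains $\bar\varep_F,\bar\varep_G$ in \eqref{errF3} are fixed, one can have, say, $\gamma_{N_G}+\bar\varep_G<\beta_{N_F}$, in which case the merged expansion up to degree $\max\{\beta_{N_F},\gamma_{N_G}\}$ has a remainder that is only known to be $\bigo(|x|^{\gamma_{N_G}+\bar\varep_G})$, too slow for \eqref{errF3} to hold for that expansion. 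Your truncation at $\rho=\min\{\beta_{N_F}+\bar\varep_F,\ \gamma_{N_G}+\bar\varep_G\}$ --- retaining only merged parts of degree strictly below $\rho$ and absorbing the discarded parts into the remainder --- is exactly the device needed, and your observation that $\rho>\min\{\beta_1,\gamma_1\}$ guarantees the truncated expansion is nonempty closes the argument. So on this point your proof is more complete than the paper's; what the paper's brevity buys is economy of exposition, at the cost of hiding a subtlety that requires a real (if small) extra idea to resolve.
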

\begin{proof}
We gives a proof for $\mathcal X(V)$, the other sets can be proved similarly.  Thanks to Remark \ref{rmk44}, it suffices to prove that the sum of any two functions of the form \eqref{Gex} is also of the form \eqref{Gex}.
 Suppose $F(x)$ is the same as in \eqref{Gex}, and 
 \beq\label{aG}
 G(x)\sim \sum_{k=1}^\infty G_k(x),\eeq 
where each $G_k$ is similar to $F_k$, but with degree $\beta'_k>1$ instead of $\beta_k$.
Arrange the set $\{\beta_k,\beta'_j:k,j\in\N\}$ as an strictly increasing sequence $(\bar\beta_k)_{k=1}^\infty$.
Clearly, $\bar\beta_k\to\infty$ as $k\to\infty$, and  $(\beta_k)_{k=1}^\infty$ and $(\beta_k')_{k=1}^\infty$ are subsequences of $(\bar \beta_k)_{k=1}^\infty$.
By inserting the zero function into \eqref{Gex} and \eqref{aG} when needed, one can rewrite (and verify) $F$ and $G$ as 
 $$F(x)\sim \sum_{k=1}^\infty \tilde F_k(x)\text{ and } G(x)\sim \sum_{k=1}^\infty \tilde G_k(x),$$
 where $\tilde F_k(x)$ and $\tilde G_k(x)$ are in $C^\infty(V)$, positively homogeneous of the same degree $\bar\beta_k$.
 Then, $F+G$ is, obviously,  of the form \eqref{Gex} with $\widetilde F_k+\widetilde G_k$ replacing $F_k$, and   $\bar\beta_k$ replacing $\beta_k$.
\end{proof}

\section{Specific cases and examples}\label{specific}

We specify many cases for the function $F$ in Theorem \ref{thm57}, i.e., describe classes of functions in  the spaces $\mathcal X(V)$, $\widehat{\mathcal X}(V)$ $\mathcal X^0(V)$ and $\widehat{\mathcal X}^0(V)$ in Definition \ref{notation2}.

For $n\in\N$, $p\in[1,\infty)$ and $x=(x_1,x_2,\ldots,x_n)\in\R^n$,  the $\ell^p$-norm of $x$ is 
$$\|x\|_p=\Big (\sum_{j=1}^n|x_j|^p \Big)^{1/p}.$$

We recall that all these norms $\|\cdot\|_p$ on $\R^n$ are equivalent to each others.

For any $n\in\N$, $p\ge 1$ and $\alpha>0$, one has the following.
\begin{enumerate}[label=\rnum]
 \item\label{rall} The function $x\in\R^n\mapsto \|x\|_p^\alpha$ belongs to $C(\R^n)\cap C^\infty(\R_*^n)\cap \mathcal H_\alpha(\R^n)$.

 \item\label{reven} Assume, additionally, that $p$ is an even number. Then the function $x\in\R^d\mapsto \|x\|_p^\alpha$ belongs to $C^\infty(\R^n_0)$.
\end{enumerate}

The first class of functions in $\mathcal X$ we describe is in the next theorem, which involves the $\ell^p$-norms of $x$ and polynomials on $\R^d$.

\begin{theorem}\label{thm63}
Let $\delta>0$ and $m\in\N$. Suppose $G:(-\delta,\infty)^m\to \R$ be a $C^\infty$-function with $G(0)=0$, and
$G_0:\R^d\to\R^d$ is a homogeneous polynomial of degree $m_0\in \Z_+$.
Define a function $F:\R^d\mapsto \R^d$ by
\beq\label{Fmanyp}
F(x)=G(\|x\|_{p_1}^{s_1},\|x\|_{p_2}^{s_2},\ldots,\|x\|_{p_m}^{s_m})G_0(x) \text{ for $x\in\R^d$,}
\eeq
where $p_j\in[1,\infty)$ and $s_j\in(0,\infty)$ for $j=1,2,\ldots,m$, are given real numbers.

Let $\bar s=\min\{s_j:j=1,2,\ldots, m\}$. Assume $\bar s+m_0> 1$.
Then the following statements hold true.

\begin{enumerate}[label=\tnum]
 \item\label{p1} $F(0)=0$ and  $F\in C(\R^d)\cap C^\infty(\R_*^d)$. 

 \item\label{p2} $F\in \mathcal X^0(\R^d_*)$.
 
 \item\label{p3} If $p_1,\ldots,p_m>1$, then $F\in C^1(\R^d)$, and, consequently, $F$ is locally Lipschitz in $\R^d$. 
 
 \item\label{p4} If  $p_1,p_2,\ldots,p_m$ are even numbers, then  $F\in \mathcal X$.
\end{enumerate}

\end{theorem}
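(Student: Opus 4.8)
The plan is to handle the four parts in order, with the real content concentrated in \ref{p2}; the remaining parts are either immediate or reductions to \ref{p2} together with \ref{p1}. For \ref{p1}, I would observe that each map $x\mapsto\|x\|_{p_j}^{s_j}$ lies in $C(\R^d)\cap C^\infty(\R^d_*)$ by property \ref{rall} and takes values in $[0,\infty)\subset(-\delta,\infty)$, so the argument of $G$ never leaves the domain of $G$; since $G$ is smooth with $G(0)=0$ and $s_j>0$ forces $\|x\|_{p_j}^{s_j}\to 0$ as $x\to 0$, the composition is continuous on $\R^d$ with $F(0)=G(0)G_0(0)=0$, and is $C^\infty$ on $\R^d_*$ as a product and composition of $C^\infty$ maps. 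This disposes of \ref{p1}.

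The core is \ref{p2}, namely building the expansion \eqref{Gex}. The idea is to Taylor-expand $G$ about the origin of $\R^m$, substitute $u_j=\|x\|_{p_j}^{s_j}$, and sort the resulting terms by homogeneity degree in $x$. Writing $c_\gamma=\frac1{\gamma!}D^\gamma G(0)$ for a multi-index $\gamma\in\Z_+^m$, each substituted monomial $c_\gamma u^\gamma G_0(x)=c_\gamma\prod_j\|x\|_{p_j}^{s_j\gamma_j}\,G_0(x)$ is, by \eqref{heg} and \eqref{simF}, positively homogeneous of degree $m_0+\sum_j s_j\gamma_j$, and lies in $C^\infty(\R^d_*)$ by property \ref{rall}. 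Since $\bar s=\min_j s_j>0$ gives $\sum_j s_j\gamma_j\ge\bar s\,|\gamma|$, each degree level contains only finitely many $\gamma$ and the attainable degrees accumulate only at $+\infty$; listing them as $d_1<d_2<\cdots$ and setting $\beta_k=m_0+d_k$, grouping the substituted monomials of common $x$-degree defines $F_k\in\mathcal H_{\beta_k}(\R^d)\cap C^\infty(\R^d_*)$ (using that each $\mathcal H_{\beta_k}$ is a linear space). The smallest degree $d_1=\bar s$ gives $\beta_1=m_0+\bar s>1$, so all $\beta_k>1$ increase to infinity, as required by \ref{h1}.

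The main obstacle is the remainder estimate \eqref{errF}. Fix $N$ and choose the Taylor order $L$ so large that $L\ge d_N/\bar s$ and $\bar s(L+1)+m_0>\beta_N$. Writing $G(u)=\sum_{1\le|\gamma|\le L}c_\gamma u^\gamma+R_L(u)$ with $R_L(u)=\bigo(|u|^{L+1})$, every $\gamma$ with $\sum_j s_j\gamma_j\le d_N$ satisfies $|\gamma|\le d_N/\bar s\le L$ and is therefore already present in the order-$L$ polynomial; hence
\beqs
F(x)-\sum_{k=1}^N F_k(x)=\Big(\sum_{\substack{1\le|\gamma|\le L\\ \sum_j s_j\gamma_j>d_N}}c_\gamma\prod_j\|x\|_{p_j}^{s_j\gamma_j}\Big)G_0(x)+R_L(u(x))\,G_0(x).
\eeqs
The first group is a finite sum of homogeneous terms of degree $>\beta_N$, hence $\bigo(|x|^{\beta'})$ for some $\beta'>\beta_N$; for the second, norm equivalence gives $|u(x)|\le C|x|^{\bar s}$ near $0$, so $R_L(u(x))G_0(x)=\bigo(|x|^{\bar s(L+1)+m_0})$ with exponent exceeding $\beta_N$ by the choice of $L$. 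This yields \eqref{errF}, and together with \ref{p1} places $F$ in $\mathcal X^0(\R^d_*)$.

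Finally, \ref{p3} and \ref{p4} refine \ref{p2}. For \ref{p3}, when $p_j>1$ the partial derivatives $\partial_i(\|x\|_{p_j}^{s_j})=s_j\|x\|_{p_j}^{s_j-p_j}|x_i|^{p_j-1}\sign(x_i)$ extend continuously across the coordinate hyperplanes, so each $\|x\|_{p_j}^{s_j}\in C^1(\R^d_0)$ and hence $F\in C^1(\R^d_0)$; at the origin, the bound $|F(x)|=\bigo(|x|^{\beta_1})$ with $\beta_1>1$ forces differentiability with $DF(0)=0$, while the homogeneity degrees of the factors give $|DF(x)|=\bigo(|x|^{\beta_1-1})\to 0$, so $DF$ is continuous at $0$ and $F\in C^1(\R^d)$, hence locally Lipschitz. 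For \ref{p4}, when each $p_j$ is even, property \ref{reven} upgrades every $\|x\|_{p_j}^{\alpha}$ to $C^\infty(\R^d_0)$, so the $F_k$ from \ref{p2} lie in $\mathcal H_{\beta_k}(\R^d)\cap C^\infty(\R^d_0)$; since even $p_j\ge 2>1$ also yields local Lipschitzness via \ref{p3}, the function $F$ meets every requirement of Assumption \ref{assumpG}, that is, $F\in\mathcal X$.
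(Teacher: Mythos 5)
Your proposal is correct and follows essentially the same route as the paper's proof: Taylor-expand $G$ at the origin of $\R^m$, substitute $u_j=\|x\|_{p_j}^{s_j}$, group the resulting monomials by homogeneity degree to obtain the $F_k$'s, and handle \ref{p3}--\ref{p4} via the explicit partial-derivative formula (using $p_j>1$) and the even-$p$ smoothness property. If anything, your verification of the remainder estimate \eqref{errF} — choosing the Taylor order $L$ in terms of $N$ so that all monomials of degree at most $\beta_N$ are captured — is slightly more explicit than the paper's, which leaves that step implicit.
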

\begin{proof}
In part \ref{p1}, the property $F(0)=0$ follows the fact $G(0)=0$. The proof of the remaining statement in \ref{p1} is elementary, using the chain rule for derivatives and property \ref{rall} right before this theorem.

\medskip
We prove \ref{p2}. By using the Taylor expansion of $G(z)$, for $z\in (-\delta,\infty)^m$, about the origin of $\R^m$, we can approximate $G(\|x\|_{p_1}^{s_1},\|x\|_{p_2}^{s_2},\ldots,\|x\|_{p_m}^{s_m})$, for $k\in\N$, by 
\beqs
\sum_{\substack{\gamma=(\gamma_1,\gamma_2,\ldots,\gamma_m)\in \Z_+^m,\\|\gamma|\le k}} c_\gamma 
\|x\|_{p_1}^{s_1\gamma_1}\|x\|_{p_2}^{s_2\gamma_2}\ldots\|x\|_{p_m}^{s_m\gamma_m}
\eeqs
with the remainder being
\beqs
\bigo((\|x\|_{p_1}^{s_1}+\|x\|_{p_2}^{s_2}+\ldots+\|x\|_{p_m}^{s_m})^{k+1})=\bigo(|x|^{\bar s(k+1)})\text{ as $x\to 0$,}
\eeqs
where  each $\gamma$ is a multi-index with length
\beq \label{cg}
|\gamma|=\gamma_1+\gamma_2+\ldots+\gamma_m, \text{ and }
c_\gamma=\frac{1}{\gamma_1!\gamma_2!\ldots\gamma_m!}\cdot
\frac{\partial^{|\gamma|}G(0)}{\partial x_1^{\gamma_1} \partial x_2^{\gamma_2} \ldots\partial x_m^{\gamma_m}}.
\eeq 
Re-arrange the set 
$$\Big\{ m_0+\sum_{j=1}^m s_j \gamma_j:\gamma_j\in \Z_+,(\gamma_1,\gamma_2,\ldots,\gamma_m)\ne 0\Big\}$$ as a strictly increasing sequence $(\beta_k)_{k=1}^\infty$.
Note that $\beta_k\to\infty$ as $k\to\infty$, and, because of the assumption $\bar s+m_0>1$, we have $\beta_k>1$ for all $k\in\N$. 

Then we can re-write $F(x)$ in the form of \eqref{Gex}, where
\beq\label{FFk}
F_k(x)=\sum_{\substack{\gamma=(\gamma_1,\gamma_2,\ldots,\gamma_m)\in \Z_+^m,\\m_0+\sum_{j=1}^m s_j \gamma_j=\beta_k}} c_\gamma \|x\|_{p_1}^{s_1\gamma_1}\|x\|_{p_2}^{s_2\gamma_2}\ldots\|x\|_{p_m}^{s_m\gamma_m}G_0(x).
\eeq

By property \ref{rall} right before this theorem and property \ref{pm} after Definition \ref{phom}, $F_k\in \mathcal H_{\beta_k}(\R^d)\cap C^\infty(\R_*^d)$. By this and the facts $F(0)=0$, $F\in C(\R^d)$ in \ref{p1}, we conclude $F\in \mathcal X^0(\R_*^d)$. 

\medskip
We prove \ref{p3}.  Because $G_0$ is a homogeneous polynomial of degree $m_0$, there is $C>0$ such that $G_0(x)$ and its derivative matrix $DG_0(x)$ can be estimated, for any $x\in\R^d$, by   
\beq\label{Gb}
|G_0(x)|\le C|x|^{m_0}\text{ and }  
|DG_0(x)|\begin{cases}\le C |x|^{m_0-1}&\text{ if $m_0\ge 1$,}\\
                      =0& \text{ if $m_0=0$.}
         \end{cases}
\eeq

By using the linear approximation of $G(z)$ for $z$ near $0$ in $\R^m$, we have 
\beqs
G(z)=\bigo(|z|)=\bigo(|z_1|+\ldots+|z_m|), \text{ as } z=(z_1,\ldots,z_m)\to 0.
\eeqs

Applying this property to $z=(\|x\|_{p_1}^{s_1},\|x\|_{p_2}^{s_2},\ldots,\|x\|_{p_m}^{s_m})$, we have
\beqs
G(\|x\|_{p_1}^{s_1},\|x\|_{p_2}^{s_2},\ldots,\|x\|_{p_m}^{s_m})=\bigo(\|x\|_{p_1}^{s_1}+\|x\|_{p_2}^{s_2}+\ldots+\|x\|_{p_m}^{s_m})=\bigo(|x|^{\bar s}) \text{ as $x\to 0$,}
\eeqs
and, together with the first inequality in \eqref{Gb},
\beq\label{FO}
F(x)=\bigo(|x|^{\bar s+m_0})\text{ as $x\to 0$. }
\eeq

Since $\bar s+m_0>1$ and $F(0)=0$, it follows \eqref{FO} that 
\beq\label{DFz}
D F(0)=0.
\eeq

For $1\le i\le m$ and $1\le j\le d$, one has the partial derivative, thanks to $p_i>1$, 
$$x=(x_1,\ldots,x_d)\in\R^d\mapsto \frac{\partial (|x_j|^{p_i})}{\partial x_j}=p_i |x_j|^{p_i-1}\sign(x_j),
$$
which is a continuous function on $\R^d$. 

For $x\in\R^d\setminus\{0\}$ and $j=1,2,\ldots,d$, we have
\beq\label{Fxj}\begin{aligned}
\frac{\partial F(x)}{\partial x_j} 
&=\sum_{i=1}^m \frac{\partial G(z)}{\partial z_i}\Big|_{z=(\|x\|_{p_1}^{s_1},\|x\|_{p_2}^{s_2},\ldots,\|x\|_{p_m}^{s_m})}
s_i\|x\|_{p_i}^{s_i-p_i} |x_j|^{p_i-1}\sign(x_j)G_0(x)\\
&\quad +G(\|x\|_{p_1}^{s_1},\|x\|_{p_2}^{s_2},\ldots,\|x\|_{p_m}^{s_m})\frac{\partial G_0(x)}{\partial x_j}. 
\end{aligned}
\eeq 

Clearly, $\partial F(x)/\partial x_j$ is continuous on $\R^d\setminus\{0\}$.
Consider its continuity at the origin.

For the first summation on the right-hand side of \eqref{Fxj}, 
\beq\label{dF1}
 \frac{\partial G(z)}{\partial z_i}\Big|_{z=(\|x\|_{p_1}^{s_1},\|x\|_{p_2}^{s_2},\ldots,\|x\|_{p_m}^{s_m})}
=\bigo(1)\text{ as $x\to 0$ },
\eeq 
and, thanks to the first estimate in \eqref{Gb}, 
\beqs
\|x\|_{p_i}^{s_i-p_i} |x_j|^{p_i-1}|\sign(x_j)G_0(x)|\le \bigo(|x|^{s_i-1}|x|^{m_0})=\bigo(|x|^{\bar s+m_0-1})\text{ as $x\to 0$.}
\eeqs

By the second estimate in \eqref{Gb}, the last term in \eqref{Fxj}, it is zero when $m=0$, and can be estimated, when $m_0\ge 1$, by
\beq \label{dF3}
\Big|G(\|x\|_{p_1}^{s_1},\|x\|_{p_2}^{s_2},\ldots,\|x\|_{p_m}^{s_m})\frac{\partial G_0(x)}{\partial x_j}\Big| 
\le \bigo(|x|^{\bar s})C|x|^{m_0-1}=\bigo(|x|^{\bar s+m_0-1}) \text{ as $x\to 0$.}
\eeq 

The above estimates from \eqref{dF1} to \eqref{dF3} for the right-hand side of \eqref{Fxj} yield
\beqs
\lim_{x\to 0}\frac{\partial F(x)}{\partial x_j}=0. 
\eeqs
Together with \eqref{DFz}, this limit implies that $\partial F(x)/\partial x_j$ is continuous at the origin for $j=1,2,\ldots,d$. 
Therefore, $F\in C^1(\R^d)$, and, consequently, $F$ is locally Lipschitz in $\R^d$.

\medskip
Finally, we prove \ref{p4}. In case all $p_j$'s are even numbers, then,  by property \ref{reven} right before Theorem \ref{thm63}, all $F_k$'s in \eqref{FFk} belong to $C^\infty(\R^d_0)$.
Combining this fact with (ii) and (iii) above, we have  $F\in\mathcal X$.
\end{proof}

\begin{example}\label{Eg3}
Let $\alpha$ be any number in $(0,\infty)$ that is not an even integer, and
\beq\label{egz} F(x)=|x|^\alpha x \text{ for $x\in\R^d$}.
\eeq 
Applying Theorem \ref{thm63}\ref{p4} to $m=1$, $G(z)=z$ for $z\in\R$, $G_0(x)=x$, $p_1=2$ and $s_1=\alpha$, we have $F\in \mathcal X$.
Even in this simple case, the asymptotic expansions obtained in Theorem \ref{thm3} is new. 
\end{example}

\begin{example}\label{Eg5}
Given a constant $d\times d$ matrix $M_0$, even numbers $p_1,p_2\ge 2$, and real numbers  $\alpha,\beta>0$, let
\beq\label{eg2}
F(x)=\frac{\|x\|_{p_1}^\alpha M_0 x}{1+\|x\|_{p_2}^\beta}\text{ for $x\in\R^d$.}
\eeq 

Applying Theorem \ref{thm63}\ref{p4} to functions  $G(z_1,z_2)=z_1/(1+z_2)$, $G_0(x)=M_0x$ and numbers $s_1=\alpha$, $s_2=\beta$, one has $F\in\mathcal X$.
The explicit form of \eqref{Gex} can be obtained quickly as follows.

For $x\in\R^d$ with  $\|x\|_{p_2}<1$, we expand $1/(1+\|x\|_{p_2}^\beta)$, using the geometric series, and can verify that
\beq\label{eg3}
F(x) \sim \sum_{k=1}^\infty (-1)^{k-1} \|x\|_{p_1}^{\alpha}\|x\|_{p_2}^{(k-1)\beta}M_0x,
\eeq
in the sense of \ref{h1} in Assumption \ref{assumpG}. This yields \eqref{Gex} with $\beta_k=1+\alpha+(k-1)\beta$.

When $\|\cdot\|_{p_1}=\|\cdot\|_{p_2}=|\cdot|$, function $F$ in \eqref{eg2} covers the particular case discussed in \eqref{eg1}, and expansion \eqref{eg3} simply reads as
\beqs
F(x) \sim \sum_{k=1}^\infty (-1)^{k-1} |x|^{\alpha+(k-1)\beta}M_0x.
\eeqs
\end{example}

\begin{example}\label{Eg4}
For $k\in\N$, let $M_k$ be a constant $d\times d$ matrix, and $p_k\ge 2$ be an even number, and $\alpha_k> 0$.

(a) Each function $x\in\R^d\mapsto \|x\|_{p_k}^{\alpha_k} M_kx$ can play the role of $F_k$ in \eqref{Gex} or \eqref{errF3}.
In this case, we write, respectively, 
\beq\label{eg0}
F(x)\sim \sum_{k=1}^\infty \|x\|_{p_k}^{\alpha_k}M_k x, \text{ or } 
\Big|F(x)-\sum_{k=1}^{N_*} \|x\|_{p_k}^{\alpha_k}M_k x\Big|=\bigo(|x|^{\alpha_{N_*}+1+\bar\varep}) 
\text{ as $x\to 0$.} 
\eeq

In particular, thanks to Theorem \ref{thm63}\ref{p4}, the function 
\beqs
F(x)=\sum_{k=1}^{N_*} \|x\|_{p_k}^{\alpha_k}M_k x, \text{ for $x\in\R^d$, belongs to $\mathcal X$.} 
\eeqs

(b) We can replace $M_kx$ in \eqref{eg0} with  an $\R^d$-valued homogeneous polynomial in $x$ of degree $m_k\in\Z_+$.  Of course, the set $\{\alpha_k + m_k:k\in\N\}$ is required to be in $(1,\infty)$ and can be re-arranged as a sequence that strictly increases to infinity.
\end{example}

In Examples \ref{Eg3}, \ref{Eg5} and \ref{Eg4} above,  we can also consider more complicated variations.
For example, in \eqref{egz}, \eqref{eg2} and \eqref{eg0}, we can replace $|x|$ or $\|x\|_{p_k}$ with $\|S_k x\|_{p_k}$, where $S_k$'s are invertible $d\times d$ matrices.

\medskip
Note that a positively homogeneous  function of the form \eqref{heg}, in general, does not belong to $C^\infty(\R^d_0)$. Hence, it cannot play a role of an $F_k$ in \eqref{Gex} or \eqref{errF3}.
However, in some cases, see \eqref{65a} and \eqref{65b} below, it can.

\begin{theorem}\label{thm65}
Consider function $F(x)$ given by \eqref{heg} with $X=Y=\R^d$, $s\ge 1$ and $(Y_j,\|\cdot\|_{Y_j})=(\R^{n_j},\|\cdot\|_{p_j})$ for $j=1,\ldots,s$.
Suppose, for $j=1,\ldots,s$,
\beq\label{65a} \text{ the number $p_j$ is even, and}
\eeq 
\beq\label{65b}
\text{ the only solution of equation $P_j(x)=0$ is $x=0$.} 
\eeq 
\begin{enumerate}[label=\tnum]
\item\label{s1} One has $F\in \mathcal H_\beta(\R^d)\cap C(\R^d)\cap C^\infty(\R^d_0)$, where number $\beta$ is defined in \eqref{simF}. 

\item\label{s2} If $\beta>1$, then $F\in \mathcal X^0$.

\item\label{s3} Let $\bar\nu=\min\{\nu_j:j=1,\ldots,s\}$ and assume $m_0+\bar\nu>1$. Then $F\in C^1(\R^d)$.
Consequently, $F\in \mathcal X$.
\end{enumerate}
\end{theorem}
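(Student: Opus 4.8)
The plan is to handle the three parts in order, the engine throughout being that the evenness hypothesis \eqref{65a} eliminates the absolute values in each norm factor. First I would record that, writing $P_j=(P_{j,1},\dots,P_{j,n_j})$ and using that $p_j$ is even,
\[
\|P_j(x)\|_{p_j}^{p_j}=\sum_{i=1}^{n_j}|P_{j,i}(x)|^{p_j}=\sum_{i=1}^{n_j}P_{j,i}(x)^{p_j}=:Q_j(x),
\]
a nonnegative homogeneous polynomial of degree $m_jp_j$. By \eqref{65b}, $Q_j(x)=0$ forces $P_j(x)=0$ and hence $x=0$, so $Q_j>0$ on $\R^d_0$. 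Thus $\|P_j(x)\|_{p_j}^{\nu_j}=Q_j(x)^{\nu_j/p_j}$ is the composition of the smooth, strictly positive $Q_j$ on $\R^d_0$ with $t\mapsto t^{\nu_j/p_j}$, whence it lies in $C^\infty(\R^d_0)$; it lies in $C(\R^d)$ because $\|P_j(\cdot)\|_{p_j}=Q_j^{1/p_j}$ is continuous and $t\mapsto t^{\nu_j}$ is continuous on $[0,\infty)$. Since $F$ is a finite product of these factors with the polynomial $P_0$, part \ref{s1} follows, the membership $F\in\mathcal H_\beta(\R^d)$ being already supplied by \eqref{simF}.

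For part \ref{s2}, I would note that, by \ref{s1}, $F$ is a single positively homogeneous function of degree $\beta>1$ belonging to $\mathcal H_\beta(\R^d)\cap C^\infty(\R^d_0)$. Choosing $N_*=1$, $\beta_1=\beta$ and $F_1=F$ makes \eqref{errF2} hold with identically zero remainder, so every requirement in the definition of $\mathcal X^0$ is satisfied and $F\in\mathcal X^0$.

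Part \ref{s3} carries the real content. I would first observe $\beta=m_0+\sum_j m_j\nu_j\ge m_0+\bar\nu>1$, using $m_j\ge1$, so the exponents appearing below are strictly positive. As $C^\infty(\R^d_0)$ is already established, it remains to prove $C^1$ regularity at the origin. The homogeneity bound $|F(x)|\le C|x|^\beta$ with $\beta>1$ gives $F(0)=0$ and differentiability with $DF(0)=0$. For the continuity of $\partial F/\partial x_i$ at $0$, I would differentiate the product, so that each summand differentiates exactly one factor. Differentiating $Q_\ell^{\nu_\ell/p_\ell}$ yields $\tfrac{\nu_\ell}{p_\ell}Q_\ell^{\nu_\ell/p_\ell-1}\partial_iQ_\ell$, which is $\bigo(|x|^{m_\ell\nu_\ell-1})$ by the two-sided estimate $c|x|^{m_\ell p_\ell}\le Q_\ell(x)\le C|x|^{m_\ell p_\ell}$ on $\R^d_0$; differentiating $P_0$ yields $\bigo(|x|^{m_0-1})$. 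Multiplying by the undifferentiated factors, which are $\bigo(|x|^{m_j\nu_j})$ and $\bigo(|x|^{m_0})$, makes every summand $\bigo(|x|^{\beta-1})$. Since $\beta-1>0$, $\partial F/\partial x_i(x)\to 0=\partial F/\partial x_i(0)$ as $x\to 0$, so $F\in C^1(\R^d)$ and hence locally Lipschitz; together with \ref{s1} and the single-term expansion from \ref{s2} this gives $F\in\mathcal X$.

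The step I expect to be the main obstacle is the continuity of the partials at the origin in \ref{s3}. The delicate point is that the single homogeneity degree lost by differentiating a factor must be exactly recovered from the surviving factors, which works precisely because $\beta>1$; and controlling the possibly negative power $Q_\ell^{\nu_\ell/p_\ell-1}$ (when $\nu_\ell<p_\ell$) forces essential use of the lower bound $Q_\ell(x)\ge c|x|^{m_\ell p_\ell}$, that is, of the strict positivity of $Q_\ell$ off the origin guaranteed by \eqref{65b}.
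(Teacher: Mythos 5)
Your proof is correct and follows essentially the same route as the paper's: part \ref{s1} via smoothness of the norm-power factors, part \ref{s2} via the trivial single-term approximation $F_1=F$ with zero remainder, and part \ref{s3} via product-rule differentiation plus homogeneity estimates showing every term of $\partial F/\partial x_i$ is $\bigo(|x|^{\beta-1})$. Your reformulation through the polynomials $Q_j=\|P_j\|_{p_j}^{p_j}$ is only notational (the paper's formula \eqref{FPderiv} is the same derivative), and you correctly flag the one genuinely delicate point—that the lower bound $Q_j(x)\ge c|x|^{m_jp_j}$ coming from \eqref{65b} is needed to control the negative powers—which the paper uses implicitly when it bounds $\|P_j(x)\|_{p_j}^{\nu_j-1}\le C'|x|^{m_j(\nu_j-1)}$.
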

\begin{proof}
 For part \ref{s1},  the fact $F\in \mathcal H_\beta(\R^d)$ is due to \eqref{simF}, while the other fact
 $F\in C(\R^d)\cap C^\infty(\R^d_0)$ is clear.
 Part \ref{s2} comes from part (i).
 
We prove part \ref{s3} now. Same as \eqref{Gb}, there is $C>0$ such that, for $j=0,1,\ldots,s$,  and any $x\in\R^d$,
\beq\label{Pb}
|P_j(x)|\le C|x|^{m_j}\text{ and }  
|DP_j(x)|\begin{cases}\le C |x|^{m_j-1}&\text{ if $m_j\ge 1$,}\\
                      =0& \text{ if $m_j=0$,}
         \end{cases}
\eeq
 
 Because $s\ge 1$ and $m_j\ge 1$ for $j\ge 1$, we have $\beta=m_0+\sum_{j=1}^s m_j\nu_j\ge m_0+\bar\nu>1$.

 Note that $F(0)=0$ and, by the first estimate in \eqref{Pb}, 
 \beqs 
 F(x)=\bigo(|x|^{m_0+\sum_{j=1}^m \nu_j m_j}) =\bigo(|x|^\beta)\text{ as $x\to 0$.}
 \eeqs 
 Then, thanks to  $\beta>1$, we have the derivative matrix $DF(0)=0$.
 
 For $j=1,2,\ldots,s$, write $P_j=(P_{j,1},P_{j,2},\ldots,P_{j,n_j})$.

Let $x=(x_1,\ldots,x_d)\in \R^d\setminus\{0\}$. Then, thanks to condition\eqref{65b}, $P_j(x)\ne 0$ for $j=1,2,\ldots,s$.
For $i=1,2,\ldots,d$, we have the partial derivative 
 \begin{multline}\label{FPderiv}
\frac{\partial F(x)}{\partial x_i} 
=\|P_1(x)\|_{p_1}^{\nu_1} \|P_2(x)\|_{p_2}^{\nu_2} \ldots \|P_s(x)\|_{p_s}^{\nu_s} \frac{\partial P_0(x)}{\partial x_i}\\
+\left\{ \sum_{j=1}^s \left(\prod_{\substack{1\le j'\le s,\\ j'\not=j} }\|P_{j'}(x)\|_{p_{j'}}^{\nu_{j'}}\right) 
\nu_j\|P_j(x)\|_{p_j}^{\nu_j-p_j} \left(\sum_{\ell=1}^{n_j} (P_{j,\ell}(x))^{p_j-1} \frac{\partial P_{j,\ell}(x)}{\partial x_i}\right)\right\} P_0(x) . 
\end{multline}

One can see that this partial derivative is continuous on $\R^d\setminus\{0\}$.
For the continuity of $\partial F(x)/\partial x_j$ at the origin, we estimate the right-hand side of \eqref{FPderiv}.
On the one hand, 
\beqs 
\|P_1(x)\|_{p_1}^{\nu_1} \|P_2(x)\|_{p_2}^{\nu_2} \ldots \|P_s(x)\|_{p_s}^{\nu_s} \Big|\frac{\partial P_0(x)}{\partial x_j}\Big|
\text{ is zero if $m_0=0$,}
\eeqs  
 or, in the case $m_0\ge 1$,  it can be estimated, with the use of  \eqref{Pb}, by
\beqs 
\|P_1(x)\|_{p_1}^{\nu_1} \|P_2(x)\|_{p_2}^{\nu_2} \ldots \|P_s(x)\|_{p_s}^{\nu_s} \Big|\frac{\partial P_0(x)}{\partial x_j}\Big|
\le C'|x|^{\sum_{j=1}^s m_j\nu_j}|x|^{m_0-1}=C'|x|^{\beta-1},
\eeqs  
 for some generic constant $C'>0$. Here, and also in calculations below, we use the equivalence between any norm $\|\cdot\|_{p_j}$ and $|\cdot|$.

On the other hand, for each $j=1,\ldots,s$, and $\ell=1,\ldots,n_j$, by using the estimates in \eqref{Pb} again, we have
\begin{align*}
&\nu_j\left(\prod_{\substack{1\le j'\le s,\\ j'\not=j} }
\|P_{j'}(x)\|_{p_{j'}}^{\nu_{j'}} \right)
\|P_j(x)\|_{p_j}^{\nu_j-p_j} |P_{j,\ell}(x)|^{p_j-1} \left|\frac{\partial P_{j,\ell}(x)}{\partial x_i}\right| |P_0(x)|\\
&\le C' \left(\prod_{\substack{1\le j'\le s,\\ j'\not=j} }|x|^{m_{j'}\nu_{j'}} \right)
\|P_j(x)\|_{p_j}^{\nu_j-1} |x|^{m_j-1} |x|^{m_0}\\
&\le C' \left(\prod_{\substack{1\le j'\le m_j,\\ j'\not=j} } |x|^{m_{j'}\nu_{j'}} \right)
|x|^{m_j(\nu_j-1)} |x|^{m_j-1} |x|^{m_0}=C'|x|^{m_0+\sum_{j'=1}^m \nu_{j'} m_{j'}-1}=C'|x|^{\beta-1}.
\end{align*}

Summing up the above estimates after \eqref{FPderiv} and passing $x\to0$, with $\beta>1$, give  
\beqs
\lim_{x\to 0} \frac{\partial F(x)}{\partial x_i}=0=\frac{\partial F(0)}{\partial x_i}.
\eeqs
The last relation comes from the fact $DF(0)=0$ obtained earlier. 
Thus, $\partial F/\partial x_i$ is continuous on $\R^d$, for $i=1,\ldots,d$.
Because $F\in C(\R^d)$ from part \ref{s1}, we obtain $F\in C^1(\R^d)$. 
Consequently, $F$ is locally Lipschitz, and, by combining this with the facts in part \ref{p1}, we conclude $F\in\mathcal X$.
 \end{proof}

In Theorem \ref{thm65}, we usually consider the case $\nu_j/p_j\not\in\N$ for all $j$. Indeed, for an index $j$ with $\nu_j/p_j\in\N$, the corresponding term $\|P_j(x)\|_{p_j}^{\nu_j}$ is a polynomial, and we can combine it with the polynomial $P_0(x)$. 

\begin{example}\label{Eg7}
Regarding condition  \eqref{65b}, it can be met for many forms of $P_j$. For example, if $P_j(x)= (x^T M_1 x)M_0x$ for $x\in\R^d$,
where $M_1$ is a positive definite $d\times d$ matrix, and $M_0$ is an invertible $d\times d$ matrix, then $P_j$ satisfies \eqref{65b}.
\end{example}

\begin{example}\label{Eg8}
Consider  $d=2$ and let 
\beqs
F(x_1,x_2)=(|x_1^3-x_2^3|^{p_1}+|x_1^3+x_2^3|^{p_1})^{\alpha/p_1}\cdot (|x_1 x_2|^{p_2}+|3x_1^2-2x_2^2|^{p_2})^{\beta/p_2}M_0(x_1,x_2),
\eeqs
where $p_1,p_2\ge 2$ are even numbers, $M_0$ is a $\R^2$-valued homogeneous polynomials of degree $m_0\in \Z_+$, and $\alpha,\beta>0$.
Then $F$ is of the form \eqref{heg} with $s=2$, $n_1=n_2=2$, $m_1=3$, $\nu_1=\alpha$, $m_2=2$, $\nu_2=\beta$, and
\beqs
P_1(x)=(x_1^3-x_2^3,x_1^3+x_2^3),\quad  P_2(x)=(x_1 x_2,3x_1^2-2x_2^2).
\eeqs
One can verify that $P_1$ and $P_2$ satisfy \eqref{65b}.
If $m_0+\min\{\alpha,\beta\}>1$, then, thanks to Theorem \ref{thm65}\ref{s3}, $F\in\mathcal X$.
\end{example}

In the remainder of this section, we focus on functions constituted essentially by $x_i^{\gamma_i}$, where $x_i$'s are coordinates of a vector $x\in\R^d$. We will consider more general forms of these power functions, and also combine them with other positively homogeneous functions such as $\|x\|_{p_i}^{\gamma_i}$.

\begin{notation}\label{notation3} 
We will use the following notation for different types of power functions.

\begin{itemize}
 \item Define $\ptset$, a subset of $\R^2$, by $\ptset=(\Z_+\times\{0\})\cup ([0,\infty)\times\{-1,1\})$.
 
 \item For $x\in\R$ and $(\gamma,\tau)\in\ptset$,  denote $\ppp{x}_\tau^\gamma$ as follows
\begin{align}
 &\ppp{x}_0^0=\ppp{x}_1^0=\ppp{x}_{-1}^0=1,\text{ for $\gamma=0$, and } \label{power0}\\
 &\ppp{x}_0^\gamma=x^\gamma,\quad \ppp{x}_1^\gamma=|x|^\gamma,\quad \ppp{x}_{-1}^\gamma=|x|^\gamma\sign(x), \text{ for $\gamma>0$.} \label{power1}
\end{align}
 
 \item For $\gamma=(\gamma_1,\gamma_2,\ldots,\gamma_n)\in\R^n$ and $\tau=(\tau_1,\tau_2,\ldots,\tau_n)\in \R^n$,
denote
\beqs
[\tau,\gamma]=\Big( (\gamma_1,\tau_1),(\gamma_2,\tau_2),\ldots,(\gamma_n,\tau_n)\Big)\in (\R^2)^n.
\eeqs

 \item For vector $x=(x_1,x_2,\ldots,x_n)\in\R^n$, multi-index $\gamma=(\gamma_1,\gamma_2,\ldots,\gamma_n)\in[0,\infty)^n$ and $\tau=(\tau_1,\tau_2,\ldots,\tau_n)\in\{-1,0,1\}^n$ with $[\gamma,\tau]\in\ptset^n$, denote 
\beq\label{vpower}
\ppp{x}_\tau^\gamma =\ppp{x_1}_{\tau_1}^{\gamma_1}\cdot \ppp{x_2}_{\tau_2}^{\gamma_2} \ldots \ppp{x_n}_{\tau_n}^{\gamma_n}.
\eeq
 
 \item For $x\in\R^n$, $p=(p_1,p_2,\ldots,p_n)\in [1,\infty)^n$ and  $\gamma=(\gamma_1,\gamma_2,\ldots,\gamma_n)\in[0,\infty)^n$, denote
\beqs
\|x\|_p^\gamma=\|x\|_{p_1}^{\gamma_1}\cdot \|x\|_{p_2}^{\gamma_2}\ldots\|x\|_{p_n}^{\gamma_n},
\eeqs 
with the convention $\|x\|_{p_i}^0=1$.
\end{itemize}
\end{notation}

The last type of power in \eqref{power1} can be used to re-write the terms like $|x_i|^\alpha x_i$ as $\ppp{x_i}_{-1}^{\alpha+1}$. Also, when some power $\gamma_i$  in \eqref{vpower} is zero, then, thanks to \eqref{power0}, the corresponding term $\ppp{x_i}_{\tau_i}^{\gamma_i}$ is $1$ regardless the value of $x_i$.

\medskip
Let $m\in \N$, $p\in[1,\infty)^m$, $\nu\in [0,\infty)^m$, and $\gamma,\tau\in\R^d$ with $[\gamma,\tau]\in \ptset^d$, and a constant vector $c\in \R^d$.
Then 
\beq \label{mix} 
\text{ the function $x\in\R^d\mapsto \|x\|_p^\nu \ppp{x}_\tau^\gamma\, c$ belongs to $\mathcal H_{|\nu|+|\gamma|}(\R^d)\cap C(\R^d)\cap C^\infty (\R_*^d)$,}
\eeq 
where $|\nu|$ and $|\gamma|$ denote the lengths of multi-indices, see \eqref{cg}. 

In the following presentation, condition $|\nu|=0$ is used to indicate that the term $\|x\|_p^\nu$ is not present in \eqref{mix}. In this case, the values of $m$ and $p$ are irrelevant.

When, in general, the term $\ppp{x}_\tau^\gamma$ is a homogeneous polynomial, or, in particular, $|\gamma|=0$,  the function in \eqref{mix} is reduced to the form \eqref{Fmanyp}, which was already dealt with in Theorem \ref{thm63}.

\begin{theorem}\label{thm69}
Assume that all eigenvectors of matrix $A$ belong to $V=\R^d_*$.
 
\begin{enumerate}[label=\tnum]
 \item Suppose function $F:\R^d\to\R^d$ and number $\beta\in(1,\infty)$ satisfy that $F$ is a finite sum of the functions in \eqref{mix} with  $|\nu|+|\gamma|=\beta$. Then 
 \beq F(0)=0 \text{ and }F\in \mathcal H_\beta(\R^d)\cap C(\R^d)\cap C^\infty (V).
 \eeq 
 Consequently, $F$ belongs to $\mathcal X^0(V)$, and can also play the role of a function $F_k$ in \eqref{Gex} or \eqref{errF3} with $\beta_k=\beta$. 
 
 \item Suppose $F$ is a finite sum of functions in \eqref{mix} with multi-indices $\nu=(\nu_1,\ldots,\nu_m)$ and $\gamma=(\gamma_1,\ldots,\gamma_d)$ satisfying
 \begin{enumerate}[label=\rnum]
  \item\label{l1} $|\nu|+|\gamma|>1$, and
  \item\label{l2} $|\nu|=0$ or ($\forall i=1,\ldots,m:\nu_i\ge 1$), and
  \item\label{l3} $\forall j=1,\ldots,d: \gamma_j=0$ or $\gamma_j\ge 1$.
 \end{enumerate}
 Then $F\in \mathcal X(V)$. 
\end{enumerate}

 \end{theorem}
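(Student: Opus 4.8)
The plan is to handle the two parts in turn, with part (i) being a short bookkeeping argument and part (ii) carrying the real content. Part (i) is essentially a consequence of the building-block property \eqref{mix} together with linearity of the relevant function classes. Each summand of $F$ has the shape $\|x\|_p^\nu\ppp{x}_\tau^\gamma\,c$ with $|\nu|+|\gamma|=\beta$, so by \eqref{mix} it lies in $\mathcal H_\beta(\R^d)\cap C(\R^d)\cap C^\infty(V)$ for $V=\R^d_*$; since $\mathcal H_\beta(\R^d)$ is a linear space (property \ref{ps} after Definition \ref{phom}) and $C(\R^d)$, $C^\infty(V)$ are linear spaces, the finite sum $F$ lies in their intersection, and $F(0)=0$ by \eqref{Fzero} as $\beta>0$. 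A single continuous function that is positively homogeneous of degree $\beta>1$ and smooth on $V$ trivially realizes the finite expansion \eqref{Gef} with $N_*=1$, $F_1=F$, and zero remainder, which is exactly what places $F$ in $\mathcal X^0(V)$ and lets it serve as one $F_k$ with $\beta_k=\beta$.

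For part (ii) I would first reorganize the finite sum $F$ by collecting all summands of a common degree $\beta=|\nu|+|\gamma|$ into a single function. Invoking part (i) on each group writes $F=\sum_{k=1}^{N_*}F_k$ with distinct degrees $\beta_k$, ordered strictly increasingly, each exceeding $1$ by hypothesis \ref{l1}, and with $F_k\in\mathcal H_{\beta_k}(\R^d)\cap C^\infty(V)$. As this representation is exact, it supplies the approximation \eqref{Gef} (with identically vanishing remainder), so the expansion requirement in Definition \ref{notation2} for membership in $\mathcal X(V)$ is already met. Everything then reduces to showing that $F$ is locally Lipschitz on all of $\R^d$.

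This local Lipschitz property is the crux, and I expect it to be the main obstacle, since $F$ is only guaranteed smooth on $V=\R^d_*$ and its behavior on the coordinate hyperplanes and at the origin must be controlled by hand. My plan is to argue factor by factor, rather than differentiating and bounding $\nabla F$ (the latter would force the delicate degree bookkeeping used in Theorems \ref{thm63} and \ref{thm65}). Concretely, I will use two elementary facts: a product of finitely many locally bounded, locally Lipschitz functions is locally Lipschitz, and a finite sum of locally Lipschitz functions is locally Lipschitz. The coordinate factors $\ppp{x_i}_{\tau_i}^{\gamma_i}$ are, as one-variable functions of $x_i$, the monomial $x_i^{\gamma_i}$ when $\tau_i=0$, and $|x_i|^{\gamma_i}$ or $|x_i|^{\gamma_i}\sign(x_i)$ when $\tau_i=\pm1$; each is locally Lipschitz on $\R$, including at $x_i=0$, precisely because $\gamma_i=0$ or $\gamma_i\ge1$, which is hypothesis \ref{l3}. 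For the norm factors $\|x\|_{p_j}^{\nu_j}$, the norm $\|\cdot\|_{p_j}$ is globally Lipschitz while $t\mapsto t^{\nu_j}$ is locally Lipschitz on $[0,\infty)$ exactly because $\nu_j\ge1$, which is hypothesis \ref{l2}; the alternative $|\nu|=0$ means these factors are absent. An exponent strictly between $0$ and $1$ would render the corresponding factor merely H\"older at the origin and break this clean product argument, so \ref{l2} and \ref{l3} are used in an essential way.

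Assembling these observations, each summand $\|x\|_p^\nu\ppp{x}_\tau^\gamma\,c$ is a product of locally bounded, locally Lipschitz factors, hence locally Lipschitz on $\R^d$, and therefore so is the finite sum $F$. Together with the exact expansion from the second paragraph, this yields $F\in\mathcal X(V)$. Finally, the standing hypothesis that all eigenvectors of $A$ lie in $V=\R^d_*$ is what subsequently allows Theorem \ref{thm57} to transfer the expansion structure to a decaying solution $y(t)$, but it plays no role in verifying $F\in\mathcal X(V)$ itself.
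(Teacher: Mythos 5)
Your proposal is correct and follows essentially the same route as the paper: both rest on property \eqref{mix} for part (i), and for part (ii) on the observation that hypotheses \ref{l2} and \ref{l3} make each factor $\|x\|_{p_i}^{\nu_i}$ and $\ppp{x_j}_{\tau_j}^{\gamma_j}$ locally Lipschitz on all of $\R^d$, so that each summand, being a product of locally bounded locally Lipschitz functions, is locally Lipschitz. The only cosmetic difference is that the paper invokes Proposition \ref{prop52} (linearity of $\mathcal X(V)$) to reduce at once to a single term of the form \eqref{mix}, whereas you regroup the summands by degree by hand before applying the same factor-wise argument.
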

\begin{proof}
Part (i) clearly comes from property \eqref{mix} and the fact $\beta>1$.
 
Consider part (ii). Thanks to Proposition \ref{prop52}, it suffices to prove (ii) for $F(x)=\|x\|_p^\nu \ppp{x}_\tau^\gamma c$ given as in \eqref{mix} with $p=(p_1,\ldots,p_m)$ and $\tau=(\tau_1,\ldots,\tau_d)$.
By \eqref{mix}, $F\in \mathcal H_\beta(\R^d)\cap C^\infty (V)$, with $\beta=|\nu|+|\gamma|$, which is greater than $1$, thanks to condition \ref{l1}.
Conditions \ref{l2} and \ref{l3} guarantee that the functions $x\in\R^d \mapsto \|x\|_{p_i}^{\nu_i}$, for $i=1,\ldots,m$, and 
$x=(x_1,\ldots,x_d)\in\R^d\mapsto \ppp{x_j}_{\tau_j}^{\gamma_j}$, for $j=1,\ldots,d$, are locally Lipschitz on $\R^d$. Therefore, the function $F$, as a multiplication of these functions and the constant vector $c$, is locally Lipschitz. All together, we have $F\in\mathcal X(V)$.
\end{proof}

\begin{example}\label{Eg10} 
Consider the following system of ODEs in $\R^2$:
\begin{align*}
y_1'+2y_1+y_2&=|y|^{2/3}|y_1|^{1/2}y_2^3,\\
y_2'+y_1+2y_2&= \|y\|_{5/2}^{1/3}y_1|y_2|^{1/4}\sign(y_2). 
\end{align*}
The corresponding matrix $A$ has eigenvalues and bases of the corresponding eigenspaces as follows:
$\lambda_1=1$, basis $\{(-1,1)\}$,
and
$\lambda_2=3$, basis $\{(1,1)\}$.
Then any eigenvector of $A$ belongs to $V=\R^2_*$.
The corresponding function $F$ belongs to $\mathcal X^0(V)$, thanks to Theorem \ref{thm69}(i), and we can apply Theorem \ref{thm57}(ii).
\end{example}

\begin{example}\label{Eg9}
Consider the following system in $\R^2$:
\begin{align*}
y_1'+y_1&=-|y_2|^\alpha y_1,\\
y_2'+y_1+2y_2&= -y_1^2 y_2 , 
\end{align*}
where $\alpha>0$ is not an even integer.
The matrix $A$, its eigenvalues and bases of corresponding  eigenspaces are 
$$
A=\begin{pmatrix}
   1&0\\1&2
  \end{pmatrix},\quad 
\begin{aligned}
\lambda_1&=1,\text{ basis } \{(1,-1)\},\\
\lambda_2&=2, \text{ basis } \{(0,1)\}. 
\end{aligned}
$$

In this case, $F=f+g$, where 
\beq\label{fg}
f(x_1,x_2)=(-|x_2|^\alpha x_1,0)\in\mathcal H_{1+\alpha}(\R^2)\text{ and } 
g(x_1,x_2)=(0,-x_1^2 x_2)\in\mathcal H_3(\R^2).
\eeq

One finds that any eigenvector of $A$ belongs to $V=\R\times \R_*$, and 
\beq\label{fgsmooth} f,g\in C^\infty(V).
\eeq 
Hence, $F\in \mathcal X^0(V)$ and we can apply Theorem \ref{thm57}(ii). 

In case $\alpha\ge 1$, we have $F$ is locally Lipschitz on $\R^2$. This fact, together with \eqref{fg} and \eqref{fgsmooth}, implies that  $F\in \mathcal X(V)$ and we can apply Theorem \ref{thm57}(i).
\end{example}

\begin{example}\label{Eg11} There are many other situations, especially in multi-dimensional spaces higher than $\R^2$. We present one example here. Let $d=3$, and assume $3\times 3$ matrix $A$ has the following eigenvalues and bases of the corresponding eigenspaces 
$$\lambda_1=\lambda_2=1, \text{ basis }  \{\xi_1=(1,0,1), \xi_2=(0,1,0)\},
\text{ and }\lambda_3=2, \text{ basis } \{\xi_3=(1,1,-1)\}.$$

Let $F(x)=(x_1^2+x_2^2)^{1/3} \cdot (x_2^6+x_3^6)^{1/5}P(x)$, where $P$ is a polynomial vector field on $\R^3$ of degree $m_0\in\N$ without the constant term, i.e., $P(0)=0$. 

Suppose $\xi$ is an eigenvector of $A$. Then
$\xi=c_1\xi_1+c_2\xi_2 $ for $c_1^2+c_2^2>0$, or $\xi=c_3\xi_3$ for $c_3\ne 0$.
One can verify that
\begin{align*}
\xi\in V&=\{(x_1,x_2,x_3):x_2\ne 0 \text{ or } x_1x_3\ne 0\}\\
&=(\R\times \R_*\times \R)\cup (\R_*\times\R\times \R_*)=(\R^2_0\times \R)\cap (\R\times\R^2_0). 
\end{align*}

Note that $F\in \mathcal H_{\beta}(\R^3)\cap C^\infty(V)$ with $\beta=2/3+6/5+m_0$, and, thanks to Theorem \ref{thm65}\ref{p3}, $F\in C^1(\R^3)$.
Then $F\in \mathcal X(V)$ and, according to Theorem \ref{thm57}(i), we can apply Theorem \ref{thm3}  to obtain an infinite series asymptotic expansion for any non-trivial, decaying solution $y(t)$ of \eqref{vtoy}.
 \end{example}
 
 \begin{example}[by A.~D.~Bruno] Below is a specific case when a solution has a similar, but different, asymptotic expansion. The system
 \beq\label{bruno}
\begin{aligned}
y_1' +y_1&=0\\
y_2' +3y_2& =\frac 3 2 y_1^2y_2^{1/3} 
\end{aligned}
\eeq 
 has a solution 
$(y_1,y_2)=(e^{-t},t^{3/2} e^{-3t})$
which, thanks to  the term $t^{3/2}$, does not have an expansion \eqref{ex1}. 

We can examine system \eqref{bruno} and see that it does not satisfy the conditions in Theorems \ref{thm3}, \ref{thm51} and \ref{thm57}. Indeed, we always require that each positively homogeneous function $F_k$  in approximations \eqref{Gex}, \eqref{Gef}, \eqref{errF3} of $F$ is infinitely differentiable in some neighborhood of any eigenvector of the matrix $A$, see Assumption \ref{assumpG}(ii) and Definition \ref{notation2}. In the current example, 
$$A=\begin{pmatrix}
   1&0\\0&3
  \end{pmatrix} \text{ and } F(x_1,x_2)=(0,\frac32x_1^2x_2^{1/3}).
$$
Clearly, $\xi=(1,0)$ is an eigenvector of $A$ but $F_1=F$, with degree $\beta_1=2+1/3$, is not a $C^\infty$-function in any neighborhood of $\xi$. Thus, our results (Theorems \ref{thm3}, \ref{thm51} and \ref{thm57}) cannot be applied to system \eqref{bruno}.
 \end{example}
 
\begin{remark}\label{discuss}
In case $F$ is analytic, Lyapunov's First Method yields that a decaying solution solution $y(t)$ of \eqref{vtoy} equals a series $\sum_{n=1}^\infty q_n(t)e^{-\mu_n t}$ for sufficiently large $t$, where $q_n(t)$'s are some polynomials. See e.g. \cite[Chapter I, \S4]{BibikovBook} where the proof is based on the Poincar\'e--Dulac normal form. 
Bruno investigates a much larger class of equations of differential sums, which are not necessarily of a dissipative type like ours. He develops the theory of power geometry and \emph{finds} solutions that have certain forms of asymptotic expansions. Specific algorithms are developed to calculate those asymptotic expansions. See \cite{BrunoBook1989,BrunoBook2000,Bruno2004,Bruno2008c,Bruno2012,Bruno2018} and references there in. 
His equations can have complex values, and the nonlinearity is comprised of power functions.
His method and results are totally different from ours. For example, he does not obtain the  particular expansion \eqref{FSx}.
Also, we obtain the asymptotic expansions for \emph{any given} non-trivial, decaying solutions, and our nonlinearity, in case of real-valued functions, can contain more general terms such as in \eqref{heg} and \eqref{mix}.
\end{remark}
 
\def\cprime{$'$}

\end{document}